\numberwithin{equation}{section}
\newtheorem{theorem}{Theorem}[section]
\newtheorem{lemma}[theorem]{Lemma}
\newtheorem{proposition}[theorem]{Proposition}
\newtheorem{definition}[theorem]{Definition}
\newtheorem{corollary}[theorem]{Corollary}
\title{On the coarse geometry of the complex of domains}
\author{Valentina Disarlo}
\begin{document}
\maketitle
\begin{abstract}
The complex of domains $D(S)$ is a geometric tool with a very rich simplicial structure, it contains the curve complex $C(S)$ as a simplicial subcomplex. In this paper we shall regard it as a metric space, endowed with the metric which makes each simplex Euclidean with edges of length 1, and we shall discuss its coarse geometry. We prove that for every subcomplex $\Delta(S)$ of $D(S)$ which contains the curve complex $C(S)$, the natural simplicial inclusion $C(S) \to \Delta(S)$ is an isometric embedding and a quasi-isometry. We prove that, except a few cases, the arc complex $A(S)$ is quasi-isometric to the subcomplex $P_\partial(S)$ of $D(S)$ spanned by the vertices which are peripheral pair of pants, and we prove that the simplicial inclusion $P_\partial(S) \to D(S)$ is a quasi-isometric embedding if and only if $S$ has genus $0$. We then apply these results to the arc and curve complex $AC(S)$. We give a new proof of the fact that $AC(S)$ is quasi-isometric to $C(S)$, and we discuss the metric properties of the simplicial inclusion $A(S) \to AC(S)$. 
\end{abstract}

\section{Introduction}
Let $S=S_{g,b}$ be a compact orientable surface of genus $g$ with $b$ boundary components. A simple closed curve on $S$ is \emph{essential} if it is not null-homotopic or homotopic to a boundary component. An \emph{essential annulus} on $S$ is a regular neighbourhood of an essential curve. The well-known \emph{complex of curves}\index{complex! of curves} is a simplicial complex whose simplices are defined in the following way: for $k \geq 0$ any collection of $k+1$ distinct isotopy class of essential annuli, which can be realized disjointly on $S$, spans a $k$-simplex. This complex has been introduced by Harvey in \cite{Har}. Ivanov, Korkmaz and Luo proved that except for a few surfaces the action of the extended mapping class group of $S$ on $C(S)$ is rigid, i.e. the automorphism group of $C(S)$ is the extended mapping class group of $S$ (see \cite{Iv, Kork, Luo}). Like any other simplicial complex here mentioned, the curve complex can be equipped with a metric such that each simplex is an Euclidean simplex with sides of length 1. This metric is natural in the sense that the simplicial automorphism group acts by isometries with respect to it. As a metric space, $C(S)$ is quasi-isometric to its 1-skeleton. Masur and Minsky proved that the complex of curves $C(S)$ is Gromov hyperbolic (see \cite{MM1}).

In this paper, we shall deal with the coarse geometry of some sort of ``generalized'' curve complex, the so-called \emph{complex of domains}\index{complex! of domains} $D(S)$. A \emph{domain}\index{domain} $D$ on $S$ is a connected subsurface of $S$ such that each boundary component of $\partial D$ is either a boundary component of $S$ or an essential curve on $S$. Pairs of pants of $S$ and essential annuli are the simplest examples of domains of $S$. The complex of domains $D(S)$, introduced by McCarthy and Papadopoulos (see \cite{MCP}), is defined as follows: for $k \geq 0$, its $k$-simplices are the collections of $k+1$ distinct isotopy classes of domains which can be realized disjointly on $S$. It has a very rich simplicial structure, which encodes a large amount of information about the structure of $S$. There is a simplicial inclusion $C(S) \to D(S)$. It is then natural to ask whether this and other ``natural'' simplicial maps between subcomplexes of $D(S)$ (see \cite{MCP}) have any interesting property when regarded as maps between metric spaces, and we shall deal with this question. We shall also deal with the relation between $D(S)$ and the similarly defined \emph{arc complex}\index{complex! of arcs} $A(S)$. The $k$-simplices of $A(S)$ are the collections of $k+1$ isotopy classes of disjoint essential arcs on $S$ for every $k \geq0$. Although this definition is very close to the one of $D(S)$, there is no ``natural'' simplicial inclusion $A(S) \to D(S)$. \\ 

The paper is organized as follows: Section 2 contains some generalities about the complex of domains and the proof that any simplicial inclusion of $C(S)$ in a subcomplex $\Delta(S)$ of $D(S)$ induces a quasi-isometry $C(S) \to \Delta(S)$. In Section 3 we describe a quasi-isometry between $A(S)$ and the subcomplex $P_\partial(S)$ of $D(S)$ spanned by peripheral pairs of pants, and we prove that the simplicial inclusion $P_\partial(S) \to D(S)$ is a quasi-isometric embedding if and only if $S$ has genus $0$. In Section 4 we combine the previous results to prove that $AC(S)$ is quasi-isometric to $C(S)$ and to show that the simplicial inclusion $A(S) \to AC(S)$ is a quasi-isometric embedding if and only if $S$ has genus $0$. \\

\textbf{Acknowledgments - } I would like to thank Athanase Papadopoulos, Mustafa Korkmaz and Daniele Alessandrini for helpful remarks. 

\section{The complex of domains $D(S)$ and its subcomplexes} 
Let $S=S_{g,b}$ be a connected, orientable surface of genus $g$ and $b$ boundary components. Let $c(S)=3g+b-4$ be the \emph{complexity} of the surface. Recall that $c(S) = 0$ if and only if $S$ is different from a sphere with at most four holes or a torus with at most one hole. In this cases $C(S)$ is either empty or not connected. We shall then always assume $c(S) > 0$.

We recall a few facts about the complex of domains. For further details, the reader can consult \cite{MCP}.\\ 
A \emph{domain} $X$ in $S$ is a proper connected subsurface of $S$ such that every boundary component of $X$ is either a boundary component of $S$ or an essential curve on $S$.   

\begin{definition}
The \emph{complex of domains}\index{complex!of domains} $D(S)$ is the simplicial complex such that for all $k \geq 0$ its $k$-simplices are the collections of $k+1$ distinct isotopy classes of disjoint domains on $S$.
\end{definition}

\begin{proposition}
If $c(S) > 0$, then $D(S)$ is connected and $\mathrm{dim } D(S) = 5g + 2b - 6$. 
\end{proposition}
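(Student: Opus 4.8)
The plan is to treat the two assertions separately: the dimension formula by matching lower and upper bounds, and connectivity by reducing to the known connectivity of $C(S)$.

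For the lower bound on the dimension I would exhibit an explicit top-dimensional simplex. Fix a pants decomposition of $S = S_{g,b}$; it consists of $N = 3g - 3 + b$ interior essential curves cutting $S$ into $P = 2g - 2 + b$ pairs of pants. Take as domains all $P$ pairs of pants together with the $N$ essential annuli that are regular neighbourhoods of the decomposition curves. These $N + P = 5g + 2b - 5$ domains are pairwise non-isotopic and can be realized with pairwise disjoint interiors, so they span a simplex of dimension $5g + 2b - 6$.

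For the upper bound, let $\{D_1, \dots, D_m\}$ be any collection of pairwise disjoint, pairwise non-isotopic domains. Let $\mathcal{C}$ be the set of distinct isotopy classes of essential curves occurring as boundary components of the $D_i$, and set $k = |\mathcal{C}|$; since these curves are disjoint and non-isotopic, $k \le 3g - 3 + b$. Realizing $\mathcal{C}$ disjointly cuts $S$ into complementary regions $R_1, \dots, R_p$; because the curves are essential and pairwise non-isotopic, no region is a disc or an annulus, so each $R_j$ has $\chi(R_j) \le -1$, and additivity of the Euler characteristic gives $p \le -\chi(S) = 2g + b - 2$. I would then argue that each annular $D_i$ is the neighbourhood of a curve in $\mathcal{C}$ (so there are at most $k$ of them), while each non-annular $D_i$ has all of its boundary in $\mathcal{C} \cup \partial S$ and therefore coincides with one of the regions $R_j$ (so there are at most $p$ of them). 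Hence $m \le k + p \le 5g + 2b - 5$, and combined with the lower bound this yields $\dim D(S) = 5g + 2b - 6$. For connectivity I would use that $C(S)$ is a subcomplex of $D(S)$ and is connected whenever $c(S) > 0$. It then suffices to join every domain-vertex to $C(S)$ by an edge: every proper domain $D$ has at least one essential boundary curve $c$, since if all of its boundary curves were boundary components of $S$ then $D$ and its complement would share no curve, contradicting the connectedness of $S$. The essential annulus around $c$ is a vertex of $C(S)$ that can be realized disjointly from $D$, giving the desired edge, so $D(S)$ is connected.

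The main obstacle I anticipate is the bookkeeping in the upper bound: verifying that each non-annular domain is forced to coincide with a whole complementary region, rather than a proper essential subsurface of one. This rests on the fact that $\mathcal{C}$ already contains \emph{all} essential boundary curves of the $D_i$, so a non-annular $D_i$ strictly contained in some $R_j$ would have to introduce a boundary curve outside $\mathcal{C}$, which is impossible. Once this is established, checking that the annular and non-annular contributions are bounded by $k$ and $p$ for the same curve system is routine.
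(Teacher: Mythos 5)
The paper itself gives no proof of this proposition: it is recalled from McCarthy--Papadopoulos \cite{MCP} as background, so there is no internal argument to compare yours against. Judged on its own, your overall strategy is the standard and correct one: the lower bound from a pants decomposition together with annular neighbourhoods of its curves, the upper bound by counting annular domains against isotopy classes of disjoint curves and non-annular domains against complementary regions via Euler characteristic, and connectivity by pushing every domain to one of its essential boundary curves and invoking the connectivity of $C(S)$ when $c(S)>0$. The connectivity argument is complete, including the key observation that a domain must have at least one essential boundary curve because it is a \emph{proper} subsurface of the connected surface $S$.

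Two steps, however, are not justified as stated. First, in the lower bound you assert that the $2g-2+b$ pairs of pants are pairwise non-isotopic. This is true but not automatic: when $S$ is closed of genus $2$, the two complementary pairs of pants share all three boundary curves, and one needs an argument that no ambient isotopy exchanges them --- for instance, such an isotopy would yield a homeomorphism acting trivially on $H_1(S;\mathbb{Z})$ that fixes each decomposition curve up to isotopy while exchanging its two sides; being orientation-preserving, it would then have to reverse the orientation of a non-separating curve, contradicting triviality on homology. Second, and more seriously, the claim that a non-annular $D_i$ \emph{coincides} with one of the regions $R_j$, together with your proposed justification (that a strictly smaller $D_i$ ``would have to introduce a boundary curve outside $\mathcal{C}$, which is impossible''), is incorrect as stated. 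The boundary curves of $D_i$ need only be \emph{isotopic} to the chosen representatives of $\mathcal{C}$, not equal to them; so $D_i$ can be strictly contained in $R_j$, with frontier curves that are parallel copies of curves of $\mathcal{C}$ lying in the interior of $R_j$. What is true, and what the count actually needs, is that such a $D_i$ is isotopic to $R_j$ (the closure of $R_j\setminus D_i$ is a union of annuli); the assignment $D_i\mapsto R_j$ is then injective because the $D_i$ are pairwise non-isotopic. Alternatively, you can sidestep the issue entirely by cutting along \emph{all} essential boundary curves of the realized $D_i$, duplicates included: each non-annular $D_i$ is then literally a complementary region of Euler characteristic at most $-1$, and the bound $2g+b-2$ on the number of such regions goes through unchanged, while the annular $D_i$ are still bounded by $3g-3+b$.
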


There is a natural simplicial inclusion $C(S)\to D(S)$ which sends every vertex of $C(S)$ to the isotopy class of an essential annulus representing it. One simplicial difference between the curve complex $C(S)$ or the arc complex $A(S)$ and the complex of domains $D(S)$ is that maximal (in the sense of inclusion) simplices of $D(S)$ are not necessarily top-dimensional simplices. In the complex of domains, we can find maximal simplices of all dimensions between 1 and $\text{dim } D(S)$. The automorphism group of the complex of domains is closely related to the group of isotopy classes of homeomorphisms of $S$, the so-called \emph{extended mapping class group of} $S$, though in general these two groups do not coincide (fot further details see \cite{MCP}).

\subsection{Subcomplexes of $D(S)$ containing $C(S)$}
We prefer to work in the setting of length spaces rather than the more general setting of metric spaces.
Recall that for $h, k \in \mathbb{R}^+$ an \emph{$(h,k)$-quasi-isometric embedding}\index{embedding!quasi-isometric} between two length spaces $(X, d_X)$ and $(Y, d_Y)$ is a map $f: X \to Y$ such that for every $x,y \in X$ the following holds: $$\frac{1}{h}d_X(x,y) - k \leq d_Y(f(x),f(y)) \leq h d_X(x, y) + k~.$$ 
A \emph{bilipschitz equivalence}\index{bilipschitz equivalence} is a $(h,0)$-quasi-isometric embedding. Recall also that a quasi-isometric embedding is a \emph{quasi-isometry}\index{quasi-isometry} if $Y$ is $c$-dense for some $c>0$, that is, if every point in $Y$ is at distance less or equal to $c$ from some point in $f(X)$. \\
Since we shall be dealing with length metrics on simplicial complexes, when we refer to an arbitrary subcomplex we shall always assume that the subcomplex is connected. Moreover, since the dimension of any complex we shall mention is bounded, we shall always identify the complexes with their $1$-skeleton when we refer to their large scale geometry.

In this section we discuss metric properties of some natural maps between subcomplexes of $D(S)$. We shall use the same notation for geometric objects on the surface (curves, pairs of pants, domains) and their isotopy classes as vertices of $D(S)$. As a first result, we have:

\begin{theorem}\label{incl}
Let $\Delta(S)$ be a subcomplex of $D(S)$ that contains $C(S)$ as a subcomplex. Then the natural simplicial inclusion $i : C(S) \to \Delta(S)$ is an isometric embedding and a quasi-isometry. 
\end{theorem}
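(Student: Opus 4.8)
The plan is to build a simplicial retraction $r : \Delta(S) \to C(S)$ and to deduce both assertions from the fact that such a map is distance non-increasing. The crucial observation is that every domain carries a natural ``boundary curve''. I would first establish the elementary lemma that every proper domain $X \subset S$ has at least one boundary component that is an \emph{essential curve} of $S$, rather than a boundary component of $S$: if every component of $\partial X$ were contained in $\partial S$, then $X$ would share no curve with the closure of $S \setminus X$, forcing $X$ to be a union of connected components of $S$ and contradicting the fact that $S$ is connected and $X$ is proper. Using this, for each vertex $X$ of $\Delta(S)$ I would fix, once and for all, an essential boundary component $r(X) \in C(S)$ (taking the core curve when $X$ is an annulus), so that $r$ restricts to the identity on the vertices of $C(S)$.

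Next I would verify that $r$ is a simplicial map. If $X_0, \dots, X_k$ span a simplex of $\Delta(S)$ then they can be realized pairwise disjointly, hence so can all their boundary components; in particular the chosen curves $r(X_0), \dots, r(X_k)$ have pairwise disjoint representatives and therefore span a simplex of $C(S)$ (after discarding repetitions). A simplicial map does not increase combinatorial distance on $1$-skeleta, so $d_{C(S)}(r(X), r(Y)) \le d_{\Delta(S)}(X, Y)$ for all vertices $X, Y$. Since $C(S)$ is a subcomplex of $\Delta(S)$, every edge-path in $C(S)$ is also one in $\Delta(S)$, which gives the reverse inequality $d_{\Delta(S)}(\alpha, \beta) \le d_{C(S)}(\alpha, \beta)$ for curves $\alpha, \beta$. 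Combining the two with $r(\alpha) = \alpha$ and $r(\beta) = \beta$ yields $d_{\Delta(S)}(\alpha, \beta) = d_{C(S)}(\alpha, \beta)$, so the inclusion $i$ is an isometric embedding.

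For the quasi-isometry it then remains to show that $C(S)$ is coarsely dense in $\Delta(S)$. Given any vertex $X$, the chosen curve $r(X)$ is a boundary component of $X$, so a thin annular neighbourhood of it may be pushed off $X$ into the complement; hence $X$ and $r(X)$ are disjoint domains and span an edge of $D(S)$. Because $\Delta(S)$ contains all of $C(S)$ and is determined by its vertex set, this edge lies in $\Delta(S)$, so $d_{\Delta(S)}(X, C(S)) \le 1$ and $C(S)$ is $1$-dense. Together with the previous paragraph this shows $i$ is a quasi-isometry. I expect the main points requiring care to be the boundary-curve lemma, the verification that $r$ is genuinely simplicial at the level of isotopy classes (realizing all the relevant domains and their boundaries simultaneously disjointly), and the observation that the adjacency of $X$ with $r(X)$ inside $\Delta(S)$ relies on the subcomplex being determined by its vertices.
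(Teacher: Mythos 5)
Your proof is correct and follows essentially the same route as the paper: choosing an essential boundary component for each domain is exactly the paper's argument (it formalizes this same choice as a ``coarse projection'' $\pi:\Delta(S)\to C(S)$ immediately after the theorem), the reverse inequality comes identically from including $C(S)$-paths into $\Delta(S)$, and the $1$-density argument is the same. The one caveat you rightly flag --- that the edge between $X$ and $r(X)$ lies in $\Delta(S)$ only if the subcomplex is full on its vertices --- is equally implicit in the paper's own proof, so it does not distinguish the two arguments.
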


\begin{proof}
Let us first prove the theorem in the case $\Delta(S)=D(S)$. \\
For every $c_1$ and $c_2$ in $C(S)$, we have $d_{D(S)}(i(c_1),i(c_2)) \leq d_{C(S)}(c_1,c_2)$. Indeed, let $\sigma$ be a geodesic path on $C(S)$ joining $c_1$ and $c_2$, namely $\sigma$ is given by the edge path $c_1=x_0\cdots x_n=c_2$ such that $d_{C(S)}(x_i, x_{i+1})=1$. By definition $x_i$ and $x_{i+1}$ are represented by two disjoint non-homotopic annuli on $S$, hence $d_{D(S)}(i(x_i),i(x_{i+1}))=1$, and $i(\sigma)$ is a path on $D(S)$ with the same length as $\sigma$.
We thus see that $${d_{D(S)}(i(c_1),i(c_2)) \leq \mbox{Length}_{D(S)}(i(\sigma)) = \mbox{Length}_{C(S)}(\sigma)= d_{C(S)}(c_1,c_2) ~.}$$
Let us now prove the reverse inequality. Let $\Sigma$ be a geodesic segment in $D(S)$ joining $i(c_1)$ and $i(c_2)$, namely $\Sigma$ is given by the edge path $i(c_1)=X_0 \cdots X_{k+1}=i(c_2)$ with $d_{D(S)}(X_i, X_{i+1})=1$ for every $0 \leq i \leq k$. Choose for every vertex $X_i $ a curve $x_i^b$ among the essential boundary components of $X_i$. The condition $X_i \cap X_{i+1} = \varnothing$ implies that either $x_i^b$ is homotopic to $x_{i+1}^b$, or $x_i^b$ and $x_{i+1}^b$ are disjoint. In the first case $x_i^b$ and $x_{i+1}^b$ are represented by the same vertex in the curve complex $C(S)$, in the second case these $x_i^b$ and $x_{i+1}^b$ are represented by two different vertices joined by an edge in $C(S)$.
Then, we can consider the path in $C(S)$ given by the $x_i^b$'s, namely $\Sigma^b:~c_1 x_1^b \cdots x_k^b c_2$, and notice that its length is not greater than the length of $\Sigma$ in $D(S)$. We conclude that
$$d_{C(S)}(c_1,c_2) \leq \mbox{Length}_{C(S)}(\Sigma^b) \leq \mbox{Length}_{D(S)}(\Sigma) = d_{D(S)}(i(c_1),i(c_2)) ~.$$

Now we notice that for an arbitrary $\Delta(S)$, by the above case, for every pair of vertices $c_1,c_2 \in C(S)$ the following holds: 
$$d_{C(S)}(c_1,c_2)=d_{D(S)}(i(c_1),i(c_2)) \leq d_{\Delta(S)}(i(c_1),i(c_2)) \leq d_{C(S)}(c_1,c_2)~.$$ 
The image of $i$ is $1$-dense in $\Delta(S)$: every domain $X$ in $\Delta(S)$ admits an essential boundary component $x^b$, which actually determines an element of $i(C(S))$ at distance $1$. Hence, $i$ is a quasi-isometry.
\end{proof}

We remark that the composition of simplicial inclusions $C(S) \to \Delta(S) \to D(S)$ is the natural inclusion $C(S) \to D(S)$. By the above theorem, we have:
\begin{corollary}\label{COROLLARIO}
\begin{enumerate}
 \item Let $\Delta(S)$ be a subcomplex of $D(S)$ that contains $C(S)$ as a simplicial subcomplex. Then, the natural simplicial inclusion $\Delta(S) \to D(S)$ is a quasi-isometry.
\item Let $\Lambda(S)$ be a subcomplex of $D(S)$ and $\Lambda C(S)$ be the subcomplex of $D(S)$ spanned by the vertices of $\Lambda(S)$ and the vertices of $C(S)$. Then, the natural simplicial inclusion $\Lambda (S) \to \Lambda C(S)$ is a quasi-isometric embedding if and only if the natural simplicial inclusion $\Lambda(S) \to D(S)$ is a quasi-isometric embedding.   
\end{enumerate}

\end{corollary}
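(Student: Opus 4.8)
The plan is to base everything on the commutative triangle $C(S)\xrightarrow{i}\Delta(S)\xrightarrow{j}D(S)$, whose composite is the natural inclusion $C(S)\to D(S)$, and to invoke Theorem~\ref{incl}, which tells us that both $i$ and $j\circ i$ are isometric embeddings with $1$-dense image. For part (1), density of $j$ is immediate: every domain $X$ has an essential boundary curve $x^b$, a vertex of $C(S)\subseteq\Delta(S)$ with $d_{D(S)}(X,x^b)=1$, so $\Delta(S)$ is $1$-dense in $D(S)$. For the distortion, given vertices $X_1,X_2\in\Delta(S)$ I would choose boundary curves $x_1^b,x_2^b$ and combine the triangle inequality (using $d(X_r,x_r^b)=1$) with the fact that $C(S)$ embeds isometrically into both $\Delta(S)$ and $D(S)$, obtaining $d_{\Delta(S)}(X_1,X_2)\le d_{C(S)}(x_1^b,x_2^b)+2=d_{D(S)}(x_1^b,x_2^b)+2\le d_{D(S)}(X_1,X_2)+4$, while $d_{D(S)}(X_1,X_2)\le d_{\Delta(S)}(X_1,X_2)$ holds trivially since paths in $\Delta(S)$ are paths in $D(S)$. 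Thus $j$ is a $(1,4)$-quasi-isometric embedding with dense image, hence a quasi-isometry. Conceptually this is just the ``two out of three'' principle: as $i$ is a quasi-isometry it admits a quasi-inverse $\bar i$, and $j$ agrees up to bounded error with the quasi-isometry $(j\circ i)\circ\bar i$.

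For part (2), the first step is to note that $\Lambda C(S)$ contains $C(S)$ as a subcomplex by construction, so part (1), applied with $\Delta(S)=\Lambda C(S)$, shows that the inclusion $\beta:\Lambda C(S)\to D(S)$ is a quasi-isometry. The two maps to be compared, $\alpha:\Lambda(S)\to\Lambda C(S)$ and $\gamma:\Lambda(S)\to D(S)$, are both inclusions of $\Lambda(S)$ equipped with its intrinsic path metric, and they satisfy $\gamma=\beta\circ\alpha$. The desired equivalence is then the formal statement that post-composition with the quasi-isometry $\beta$ preserves the class of quasi-isometric embeddings: if $\alpha$ is a quasi-isometric embedding then so is $\gamma=\beta\circ\alpha$; conversely, letting $\bar\beta$ be a quasi-inverse of $\beta$, the map $\alpha$ lies within bounded distance of $\bar\beta\circ\gamma$ (because $\bar\beta\circ\beta$ is within bounded distance of the identity on $\Lambda C(S)$ and $\alpha$ takes values in $\Lambda C(S)$), so if $\gamma$ is a quasi-isometric embedding then $\alpha$ is one as well.

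I do not anticipate a genuine obstacle, since once Theorem~\ref{incl} is in hand the whole corollary is essentially formal. The only points that require care are verifying that $C(S)\subseteq\Lambda C(S)$, so that part (1) genuinely applies, and keeping track of the additive constants when passing through the quasi-inverse $\bar\beta$ in the converse half of part (2); there the resulting multiplicative and additive constants depend only on those of $\beta$ and $\bar\beta$, which in turn come from Theorem~\ref{incl}.
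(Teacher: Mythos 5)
Your proposal is correct and follows essentially the same route as the paper: the paper derives the corollary purely formally from Theorem~\ref{incl} together with the observation that the composition $C(S)\to\Delta(S)\to D(S)$ is the natural inclusion, offering no further detail. Your argument simply fills in the bookkeeping (the $1$-density via boundary curves, the $(1,4)$-estimates, and the quasi-inverse manipulation for part (2)) that the paper leaves implicit.
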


If $i: C(S) \to \Delta(S)$ is the above mentioned natural inclusion, we can exhibit an uncountable family of right inverse maps to $i$ which are quasi-isometries between $C(S)$ and $\Delta(S)$. \\
For every domain $X$, let us choose one of its essential boundary components, say $x^b$. Given any such choice, we now define a \emph{coarse projection} $\pi: \Delta(S) \to C(S)$ as the map such that $\pi(X)$ is the vertex in $C(S)$ given by $x^b$. Of course, by our definition, we have $\pi \circ i = id_{C(S)}$, and there exist infinitely many such coarse projections. We also notice that for every coarse projection $\pi$ and for every $X \in \Delta(S)$, we have $d_{\Delta(S)}(i \circ \pi(X),X ) \leq 1 $.

\begin{theorem}
The following statements hold:
\begin{enumerate}
\item Let $\pi_1, \pi_2: \Delta(S) \to C(S) $ be coarse projections. For every $X,Y \in \Delta(S)$ we have:
$$ d_{C(S)}(\pi_2(X), \pi_2(Y)) - 2 \leq d_{C(S)}(\pi_1(X),\pi_1(Y)) \leq d_{C(S)}(\pi_2(X),\pi_2(Y)) + 2 .$$
\item Let $\pi:\Delta(S) \to C(S)$ be a coarse projection. Then $\pi$ is a $(1,2)$-quasi isometric embedding and a quasi-isometry.
\end{enumerate}
\end{theorem}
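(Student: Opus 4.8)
The plan is to derive both statements from two facts already established: that $i : C(S) \to \Delta(S)$ is an isometric embedding (Theorem \ref{incl}), and that $d_{\Delta(S)}(i\circ\pi(X),X)\le 1$ for every coarse projection $\pi$ and every vertex $X$. The only genuinely geometric input beyond these is the observation that any two essential boundary components of a single \emph{connected} domain $X$ are either isotopic or can be realized disjointly on $S$; consequently, viewed as vertices of $C(S)$, they either coincide or are joined by an edge, so they are at distance at most $1$ in $C(S)$.

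For the first statement, I would first record that for a fixed $X$ the curves $\pi_1(X)$ and $\pi_2(X)$ are two essential boundary components of the same domain $X$, so by the observation above $d_{C(S)}(\pi_1(X),\pi_2(X))\le 1$, and likewise $d_{C(S)}(\pi_1(Y),\pi_2(Y))\le 1$. Both inequalities then follow from two applications of the triangle inequality in $C(S)$: on the one hand
$$d_{C(S)}(\pi_1(X),\pi_1(Y)) \le d_{C(S)}(\pi_1(X),\pi_2(X)) + d_{C(S)}(\pi_2(X),\pi_2(Y)) + d_{C(S)}(\pi_2(Y),\pi_1(Y)) \le d_{C(S)}(\pi_2(X),\pi_2(Y)) + 2,$$
and exchanging the roles of $\pi_1$ and $\pi_2$ yields the matching lower bound.

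For the second statement, the key is that since $i$ is an isometric embedding we may rewrite $d_{C(S)}(\pi(X),\pi(Y)) = d_{\Delta(S)}(i\pi(X),i\pi(Y))$ and then compare $i\pi(X)$ with $X$ via $d_{\Delta(S)}(i\pi(X),X)\le 1$. Inserting $X,Y$ into a triangle inequality in $\Delta(S)$ gives, on one side,
$$d_{\Delta(S)}(i\pi(X),i\pi(Y)) \le d_{\Delta(S)}(i\pi(X),X) + d_{\Delta(S)}(X,Y) + d_{\Delta(S)}(Y,i\pi(Y)) \le d_{\Delta(S)}(X,Y) + 2,$$
and inserting $i\pi(X),i\pi(Y)$ gives $d_{\Delta(S)}(X,Y) \le d_{C(S)}(\pi(X),\pi(Y)) + 2$, i.e. $d_{C(S)}(\pi(X),\pi(Y)) \ge d_{\Delta(S)}(X,Y) - 2$. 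Together these show $\pi$ is a $(1,2)$-quasi-isometric embedding. Finally, since $\pi\circ i = id_{C(S)}$, the map $\pi$ is surjective, so its image is $0$-dense in $C(S)$ and $\pi$ is therefore a quasi-isometry.

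I do not expect a serious obstacle: the argument is a bookkeeping of triangle inequalities, reducing entirely to the two quoted facts together with the disjointness of the boundary components of a connected domain. The one point to state carefully is that last disjointness claim, since it is exactly what controls the additive constant $2$; it is the connectedness of $X$ that keeps any two chosen boundary curves at $C(S)$-distance at most $1$, and hence prevents the constant from degrading.
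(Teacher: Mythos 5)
Your proposal is correct and follows essentially the same route as the paper: part (1) is the same triangle-inequality argument using the fact that two essential boundary components of a single connected domain are disjoint or isotopic (hence at $C(S)$-distance at most $1$), and part (2) is the same comparison via the isometric embedding $i$ and the bound $d_{\Delta(S)}(i\circ\pi(X),X)\le 1$, with surjectivity from $\pi\circ i = \mathrm{id}_{C(S)}$ giving the quasi-isometry. No gaps to report.
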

 
\begin{proof}
Let us prove (1).\\
We notice that if $\pi_1(X) \neq \pi_2(X)$, there is an edge in $C(S)$ connecting them, for they are different boundary components of the same domain $X$. We get a path in $C(S)$ of vertices $\pi_1(X)\pi_2(X)\pi_2(Y)\pi_1(Y)$, and we can conclude that
\begin{align*}
d_{C(S)}(\pi_1(X), \pi_1(Y)) & \leq d_{C(S)}(\pi_1(X), \pi_2(X)) + d_{C(S)}(\pi_2(X), \pi_2(Y)) \\ 
			     & + d_{C(S)}(\pi_2(Y) , \pi_1(Y)) \\ 
			     &= d_{C(S)}(\pi_2(X),\pi_2(Y)) + 2 
\end{align*}
and 
\begin{align*}
d_{C(S)}(\pi_2(X), \pi_2(Y)) & \leq d_{C(S)}(\pi_2(X), \pi_1(X)) + d_{C(S)}(\pi_1(X), \pi_1(Y))  \\ 
			     & + d_{C(S)}(\pi_1(Y) , \pi_2(Y)) \\ 
			     & = d_{C(S)}(\pi_1(X),\pi_1(Y)) + 2
~. 
\end{align*}\\
Now we prove (2). \\
Let us consider the path given by the edge path $i(\pi(X))XYi(\pi(Y))$ in $\Delta(S)$ and remark that $d_{\Delta(S)}(i(\pi(X)), X), d_{\Delta(S)}(i(\pi(Y)),Y)$ are at most $1$.
By Theorem \ref{incl}, the simplicial inclusion $i: C(S) \to \Delta(S)$ is an isometric embedding, then 
$$d_{C(S)}(\pi (X),\pi (Y)) = d_{\Delta(S)}(i(\pi(X)), i(\pi(Y))) \leq d_{\Delta(S)}(X,Y) + 2$$ 
and $$d_{\Delta(S)} (X,Y) \leq d_{\Delta(S)} (i(\pi(X)), i(\pi(Y))) + 2 = d_{C(S)}(\pi (X),\pi(Y)) + 2~.$$
\end{proof}

\section{The arc complex $A(S)$ as a coarse subcomplex of $D(S)$}
Recall that an \emph{essential arc} on $S=S_{g,b}$ is a properly embedded arc whose endpoints are on the boundary components of $S$ such that it is not isotopic to a piece of boundary of $S$. The \emph{arc complex}\index{complex! of arcs} $A(S)$ is the simplicial complex whose $k$-simplices for $k \geq 0$ are collections of $k+1$ distinct isotopy classes of essential arcs on $S$ which can be realized disjointly. Recall that $A(S)$ is not a natural subcomplex of $D(S)$, unlike $C(S)$. The aim of this section is to prove that if $b \geq 3$ and $S \neq S_{0,4}$ (or, equivalently, $b \geq 3$ and $c(S) > 0$), then the arc complex $A(S)$ is quasi-isometric to the subcomplex of $D(S)$ whose vertices are pair of pants in $S$ having at least one boundary component on $\partial S$. 

Under the hypothesis on $S$, $A(S)$ is a locally infinite complex with infinitely many vertices, each maximal simplex corresponds to a decomposition of $S$ as union of hexagons, and it holds $ \mathrm{dim } A(S) = 6g + 3b - 7$ (for further details see \cite{MCP}).

In \cite{MC} Irmak and McCarthy prove that, except for a few cases, the group of simplicial automorphisms of $A(S)$ is the extended mapping class group of $S$. 

We regard $A(S)$ as a metric space with the natural metric such that every simplex is Euclidean, with edges of length 1. Like the above mentioned complexes, $A(S)$ is quasi-isometric to its 1-skeleton. 

\subsection{The boundary graph complex $A_B(S)$} 
Given an essential arc $\alpha$ on $S$, its \emph{boundary graph}\index{boundary graph} $G_\alpha$ is the graph obtained as the union of $\alpha$ and the boundary components of $S$ that contain its endpoints (see \cite{MCP}). 

\begin{definition}
The \emph{complex of boundary graphs}\index{complex! of boundary graphs} $A_B(S)$ is the simplicial complex whose $k$-simplices, for each $k \geq 0$, are collections of $k+1$ distinct isotopy classes of disjoint boundary graphs on $S$. 
\end{definition}

We shall always assume $S=S_{g,b}$ with $b\geq 3$ and $S\neq S_{0,4}$, for in these cases either $A_B(S)$ or $C(S)$ is not arcwise connected. \\
By identifying $G_\alpha$ with $\alpha$, we find that $A_B(S)$ and $A(S)$ have the same set of vertices, but in general $A_B(S)$ has fewer simplices than $A(S)$, for disjoint arcs with endpoints on the same boundary components are joined by an edge in $A(S)$ but not in $A_B(S)$. There is a natural simplicial inclusion between the complex of boundary graphs and the arc complex $A_B(S)\rightarrow A(S)~.$

We will always regard $A_B(S)$ and $A(S)$ as metric spaces with their natural simplicial metrics $d_{A_B(S)}$ and $d_{A(S)}$. 
We shall use the same notation for arcs on the surfaces and their isotopy classes on $A(S)$ or $A_B(S)$.

\begin{lemma}\label{conj}
Let $a,~b$ be two vertices of $A(S)$ that are joined by an edge in that complex. Consider now $a$ and $b$ as vertices of $A_B(S)$, Then $d_{A_B(S)}(a,b) \leq 4.$
\end{lemma}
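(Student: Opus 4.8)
The plan is to exploit the fact that $A(S)$ and $A_B(S)$ share the same vertex set, so that the \emph{only} obstruction to an edge of $A(S)$ being an edge of $A_B(S)$ is a shared foot: since $a$ and $b$ are disjoint as arcs, the boundary graphs $G_a$ and $G_b$ fail to be disjoint precisely when $a$ and $b$ have endpoints on a common boundary component of $S$. Write $\partial_a,\partial_b\subseteq\partial S$ for the (one or two) boundary components met by $a$, respectively $b$. If $\partial_a\cap\partial_b=\varnothing$ then $G_a\cap G_b=\varnothing$ and $d_{A_B(S)}(a,b)=1$, so I may assume that $a$ and $b$ share at least one foot. The strategy is then to interpolate with auxiliary arcs whose feet avoid the shared components, each such arc contributing one edge in $A_B(S)$, and to bound how many are needed.

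First I would look for a boundary component $\delta$ with $\delta\notin\partial_a\cup\partial_b$. If one exists, I produce an essential arc $c$ with both endpoints on $\delta$ and disjoint from $a\cup b$; then $\partial_c=\{\delta\}$ is disjoint from both $\partial_a$ and $\partial_b$, so $G_c$ is disjoint from $G_a$ and from $G_b$, and $a\,c\,b$ is a path, giving $d_{A_B(S)}(a,b)\le 2$. Since $|\partial_a\cup\partial_b|\le 4$, such a $\delta$ certainly exists when $b\ge 5$; when $b=4$ it also exists, because once $a$ and $b$ share a foot their feet cannot exhaust four components; and when $b=3$ it exists unless both arcs are two-footed and $\partial_a\cup\partial_b=\partial S$.

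The only remaining configuration is $b=3$ (which forces $g\ge 1$, since $c(S)>0$) with $\partial_a$ and $\partial_b$ each consisting of two distinct components and $|\partial_a\cup\partial_b|=3$; after relabelling, $\partial_a=\{1,2\}$ and $\partial_b=\{2,3\}$, sharing the single foot $2$. Here no free component is available, so I peel the two arcs apart separately: I choose an essential arc $c$ with both feet on component $3$ and disjoint from $a$ (so $a\,c$ is an edge, as $\{3\}\cap\{1,2\}=\varnothing$), and an essential arc $d$ with both feet on component $1$ and disjoint from $b$ (so $d\,b$ is an edge). Since $\partial_c\cap\partial_d=\{3\}\cap\{1\}=\varnothing$, the arcs $c$ and $d$ are joined by an edge as soon as they are realized disjointly, yielding the path $a\,c\,d\,b$ of length $3$; and if $c$ and $d$ cannot be made disjoint I insert one further essential arc $e$ with both feet on the shared component $2$ and disjoint from $c\cup d$, which is adjacent to both (as $\{2\}\cap\{1\}=\{2\}\cap\{3\}=\varnothing$) and produces the path $a\,c\,e\,d\,b$ of length $4$. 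In all cases $d_{A_B(S)}(a,b)\le 4$.

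The main obstacle is the existence, essentiality and \emph{simultaneous} disjointness of the auxiliary arcs $c,d,e$: their adjacency relations in $A_B(S)$ are immediate once the arcs exist, but producing an essential arc based at a prescribed boundary component and disjoint from one or two given disjoint arcs requires genuine room in the surface. This is exactly where the hypotheses $b\ge 3$, $S\ne S_{0,4}$ and $c(S)>0$ (and, in the last case, the resulting positive genus) enter, and I would verify it by inspecting the component of $S$ cut along the relevant arcs that contains the chosen boundary component and checking that it is not a disk, an annulus, or a pair of pants.
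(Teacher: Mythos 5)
Your reduction of adjacency in $A_B(S)$ to ``disjoint arcs with disjoint feet'' is correct, and your Case 1 is fine. The fatal problem is the key step of your Case 2: the existence of a free boundary component $\delta\notin\partial_a\cup\partial_b$ does \emph{not} guarantee an essential arc with both endpoints on $\delta$ disjoint from $a\cup b$. Concretely, take $S=S_{1,3}$ with boundary components $1,2,3$; let $a$ be an arc with both endpoints on component $1$ cutting $S$ into a genus-one piece and a pair of pants $X$ containing components $2$ and $3$, and let $b\subset X$ be an arc with both endpoints on component $1$ separating component $2$ from component $3$ inside $X$. Then $\partial_a=\partial_b=\{1\}$, so both $2$ and $3$ are free, yet the component of $S\setminus(a\cup b)$ containing component $2$ (respectively $3$) is an annulus, and in an annulus every embedded arc with both endpoints on one boundary circle cuts off a disc, hence is inessential in $S$. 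So the verification you defer to the end (``not a disk, an annulus, or a pair of pants'') actually fails here, and there is no local fix: in this example \emph{no} boundary graph at all is disjoint from both $G_a$ and $G_b$, since any candidate arc would have to lie in one of those two annuli with all feet on component $2$ or on component $3$. Hence $d_{A_B(S)}(a,b)\geq 3$, so your claimed bound of $2$ in Case 2 is false, not merely unproved; any correct argument must use chains of length $3$ or $4$ passing through arcs that cross $a$ or $b$ (intermediate boundary graphs need only be disjoint from their neighbours in the chain).

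This is precisely why the paper's proof looks the way it does. In genus $0$ (with $b\geq 5$) the paper finds a complementary component containing \emph{two distinct} boundary circles of $S$ and joins them by an arc, which is automatically essential --- avoiding the pitfall that dooms your single-$\delta$ loops. For $g\geq 1$, $b=3$ the paper gives a case-by-case analysis of the topological type of $a\cup b$ (treating boundary components as marked points) and exhibits explicit chains of length up to $4$ in figures; your $S_{1,3}$-type configuration falls into those cases. Your final case ($\partial_a=\{1,2\}$, $\partial_b=\{2,3\}$) has the same structural weakness: the existence of the arc $e$ with feet on the shared component $2$, essential and disjoint from $c\cup d$, is asserted but never established, and it is vulnerable to the same annulus phenomenon. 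As it stands, the proposal cannot be completed along the route you describe.
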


\begin{proof}

By our assumption on $S$, either the genus $g$ of $S$ is $0$ and $S$ has more than $5$ boundary components, or the genus of $S$ is at least $1$ and $S$ has at least $3$ boundary components. Now, let $a,b$ be different vertices in $A(S)$, and assume they are not connected by an edge in $A_B(S)$. \\
In the case $g=0$, since $b \geq 5$, for every pair of vertices $a,b \in A(S)$ there exists a connected component of $S \setminus a \cup b$ which contains at least two different boundary components of $S$, and we can find a boundary graph that is disjoint from $a$ and $b$. Hence, the distance in $A_B(S)$ between $a$ and $b$ is 2. \\
For $g \geq 1$, we will give a detailed proof only in the case when $S$ has exactly $3$ boundary components, for this is the most restrictive case. We have different situations depending on the union of $a$ and $b$ as arcs intersecting minimally in the surface (here and in the rest of this proof, we shall consider the boundary components as marked points. This also the case in the figures below): 
\begin{enumerate}
\item $a \cup b$ is a simple closed curve. \\ It can bound a disc or not. In both cases, we have $d_{A_B(S)}(a,b) \leq 4$ (see Figures \ref{FIGURA1a} and \ref{FIGURA1b}, subcases (a) and (b)).
\begin{figure}[h!]
\centering%
{
\includegraphics[width=7cm]{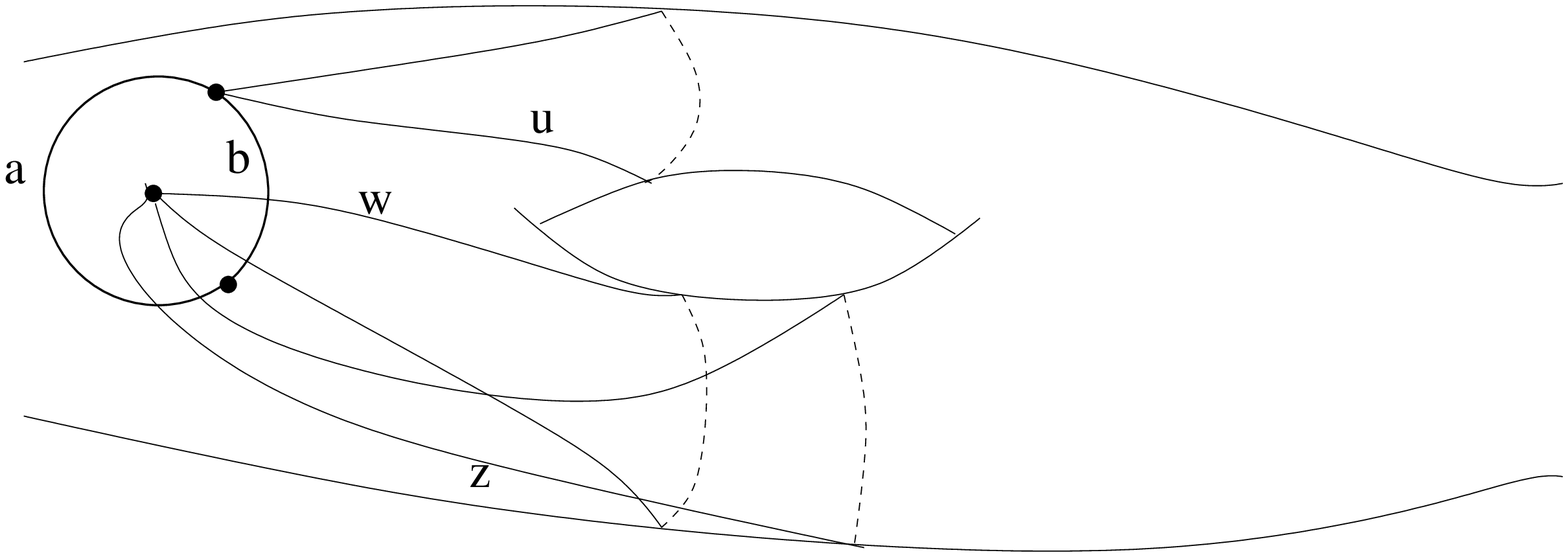}
\includegraphics[width=6cm]{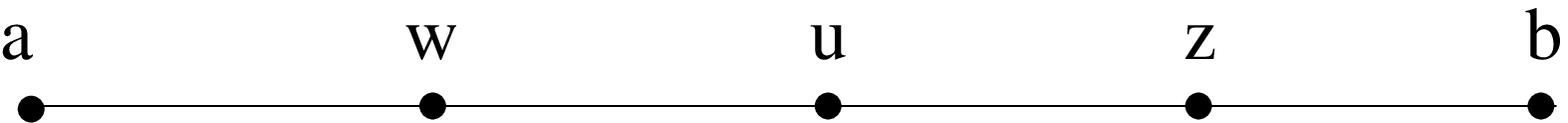}
}
\caption{The case where $a \cup b$ is a simple closed curve \label{FIGURA1a} (a)}
\end{figure}

\begin{figure}[h!]
\centering
{
\includegraphics[width=4cm]{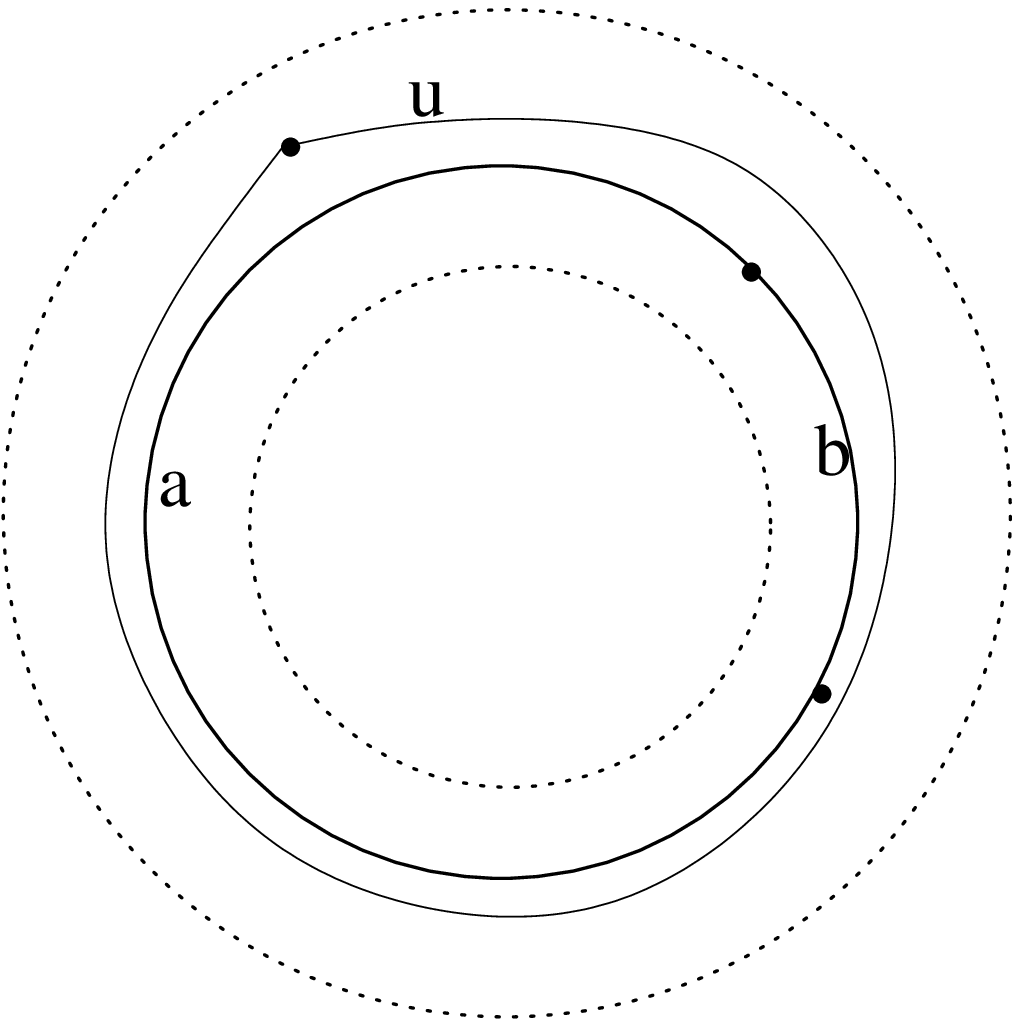}
\includegraphics[width=3.5cm]{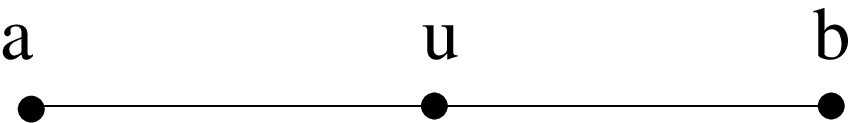}
}
\caption{The case where $a \cup b$ is a simple closed curve \label{FIGURA1b} (b)}
\end{figure}

\item $a \cup b$ is a simple arc with two different endpoints. \\
In this case, we have $d_{A_B(S)}(a,b) \leq 3$ (see Figure \ref{FIGURA2}).

\begin{figure}[htp]
\centering%
{
\includegraphics[width=6.5cm]{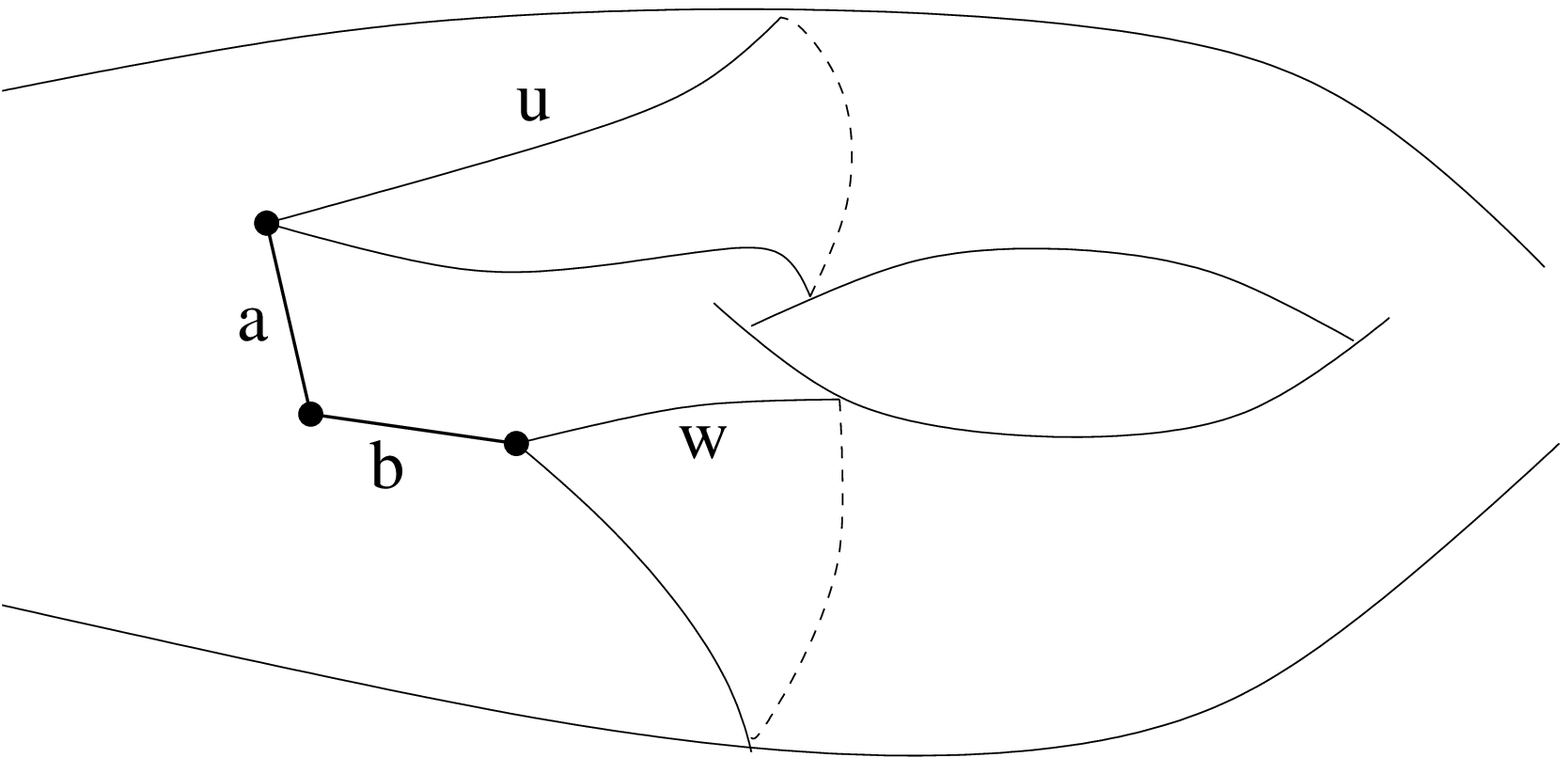}
\includegraphics[width=5cm]{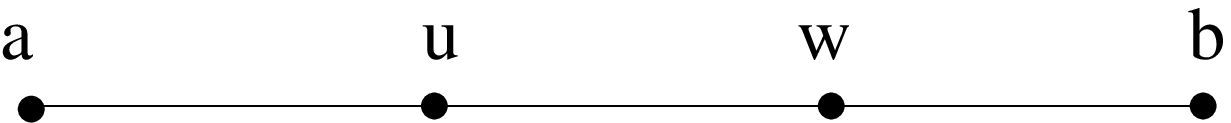}
}
\caption{The case where $a \cup b$ is an open arc \label{FIGURA2}}
\end{figure}

\item $a$ bounds a disc, and $b$ is not a closed curve. \\
In this case, we have $d_{A_B(S)}(a,b) \leq 4$ (see Figure \ref{FIGURA3a}, \ref{FIGURA3b}, \ref{FIGURA3c} subcases (a), (b), (c)).

\begin{figure}[h!]
\centering
{
\includegraphics[width=8.5cm]{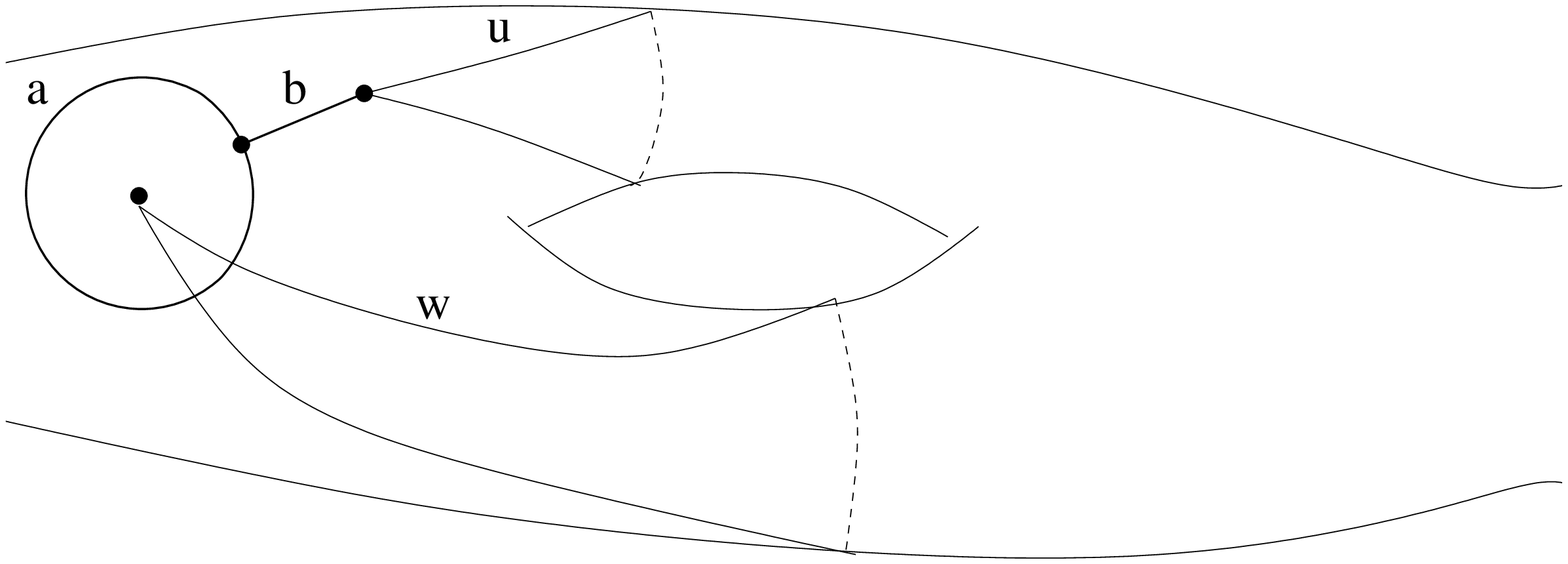}
\includegraphics[width=4.5cm]{3.eps}
}
\caption{The case where $a$ bounds a disc and $b$ is an open arc\label{FIGURA3a} (a)}
\end{figure}
\begin{figure}[h!]
\centering
{\includegraphics[width=8.5cm]{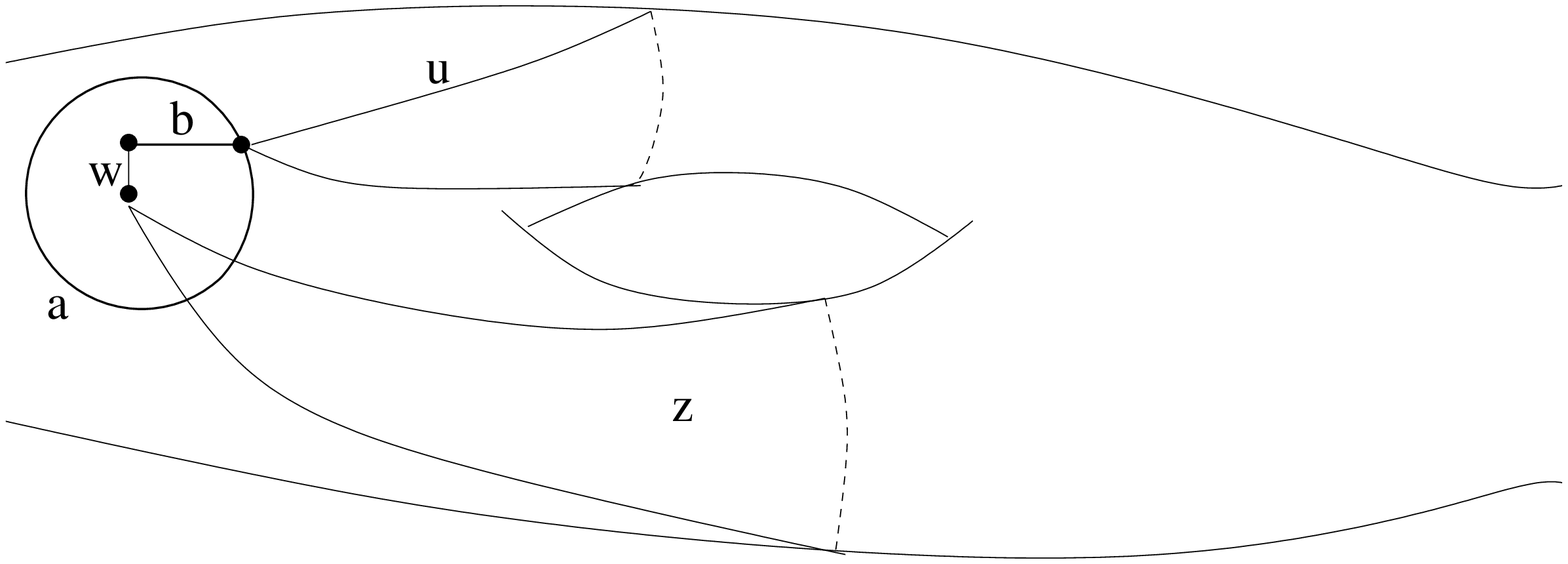}
\includegraphics[width=6cm]{4.eps}
}
\caption{The case where $a$ bounds a disc and $b$ is an open arc\label{FIGURA3b} (b)}
\end{figure}

\begin{figure}[h!]
\centering 
{\includegraphics[width=2cm]{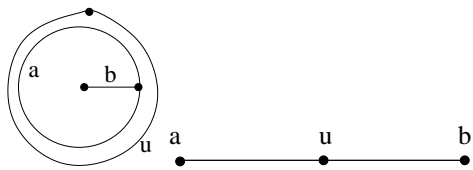}
\includegraphics[width=4cm]{2.eps}
}
\caption{The case where $a$ bounds a disc and $b$ is an open arc \label{FIGURA3c} (c)}
\end{figure}

\item Both $a$ and $b$ are closed simple curves, and $a$ bounds a disc. 
In this case, we have $d_{A_B(S)}(a,b) \leq 4$ (see Figure \ref{FIGURA4a}, \ref{FIGURA4b}, \ref{FIGURA4c}).

\begin{figure}[h!]
\centering%
{
\includegraphics[width=9cm]{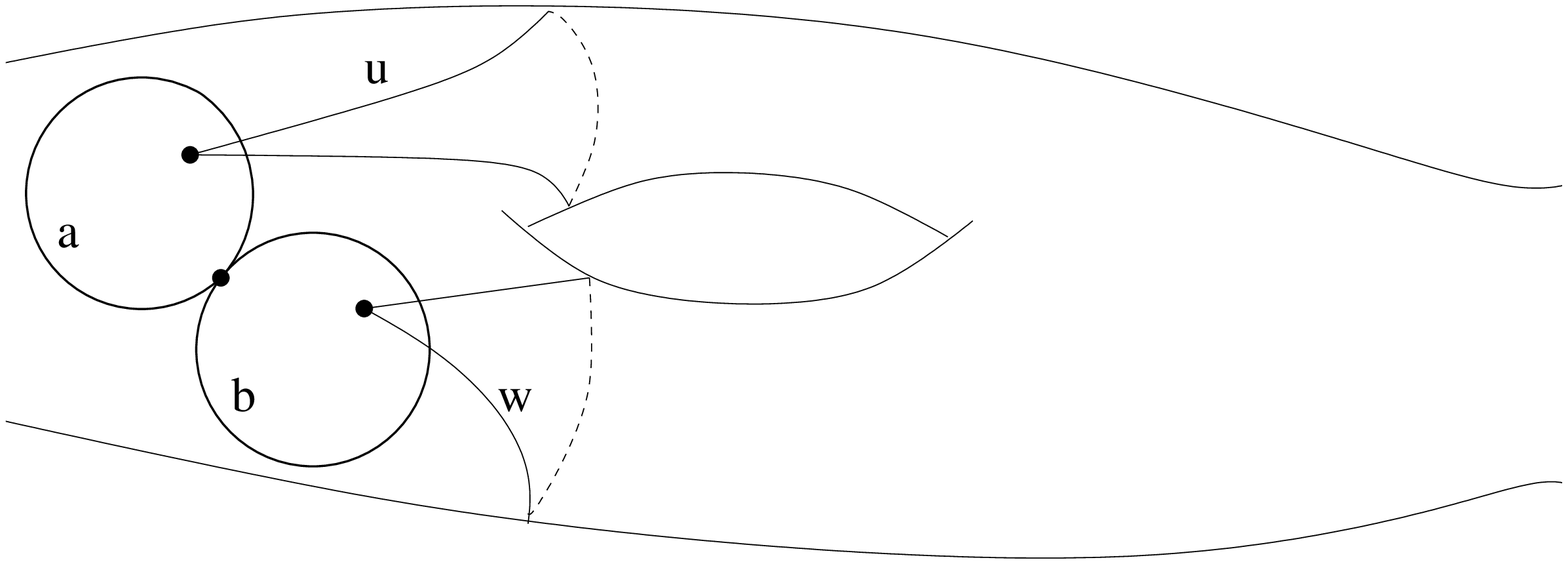}
\includegraphics[width=5cm]{3.eps}
}
\caption{The case where both $a$ and $b$ are closed curves, $a$ bounds a disc \label{FIGURA4a} (a)}
\end{figure}
\begin{figure}[h!]
\centering
{\includegraphics[width=9cm]{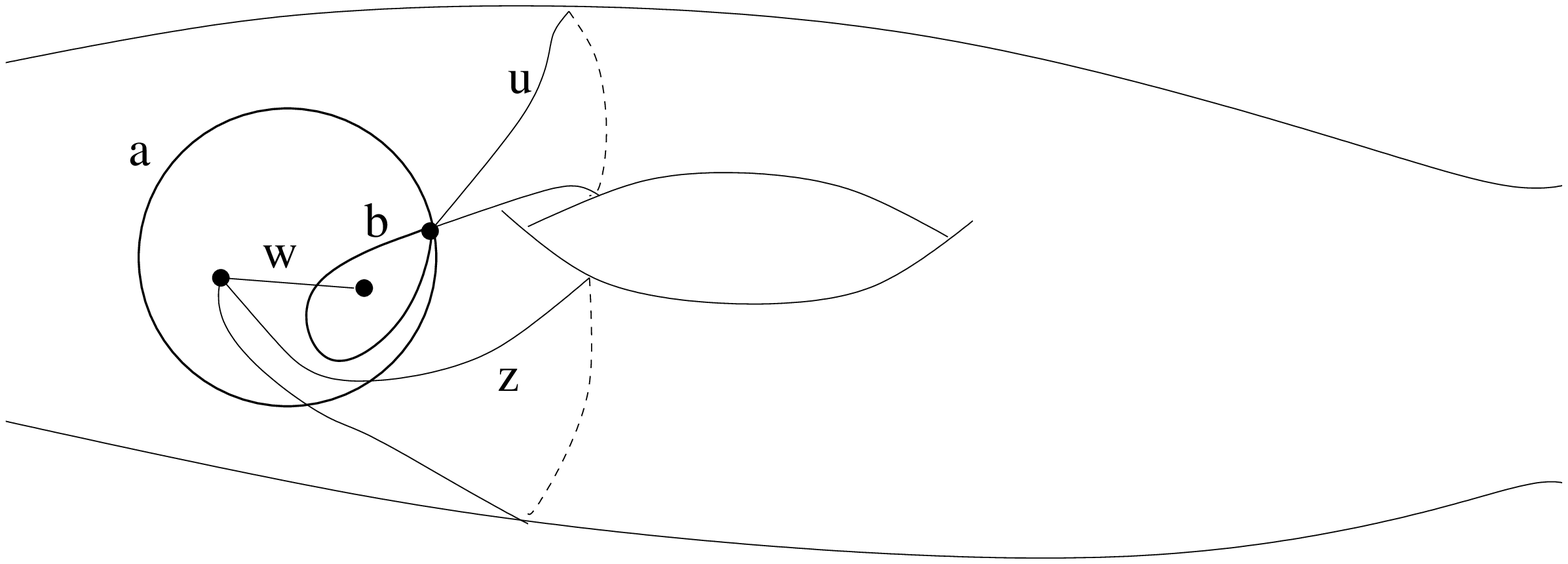}
\includegraphics[width=6cm]{4.eps}
}
\caption{The case where both $a$ and $b$ are closed curves, $a$ bounds a disc \label{FIGURA4b} (b)}
\end{figure}
\begin{figure}[h!]
\centering
{\includegraphics[width=4cm]{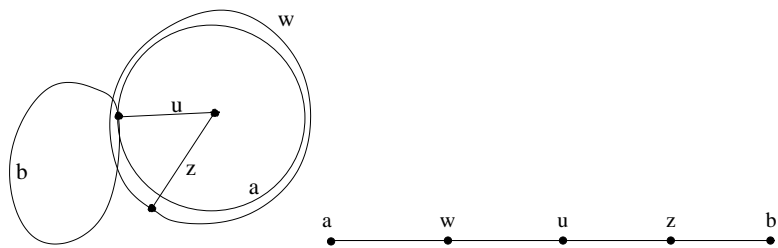}
\includegraphics[width=6cm]{4.eps}
}
\caption{The case where both $a$ and $b$ are closed curves, $a$ bounds a disc \label{FIGURA4c} (c)}
\end{figure}

\item  $a$ and $b$ are closed curves, but none of them bounds a disc. \\ 
Recall that $a$ and $b$ pass through the same boundary component of $\partial S$, and $a\cup b$ can disconnect $S$ in at most 3 distinct connected subsurfaces. Either one of them contains a simple arc joining the two remaining boundary components of $S$, or there is a non-disc component of $S \setminus a \cup b$ which contains an essential arc with both endpoints on the same boundary component of $\partial S$, disjoint from $a$ and $b$ (see Figure \ref{FIGURA5}). In this case, we get $d_{A_B(S)}(a,b) = 2$.
\item $a$ is a closed curve, which does not bound a disc and $b$ is not a closed curve. \\
We can use the same argument as in the previous case. 
\begin{figure}[h!]
\centering
\includegraphics[width=10cm]{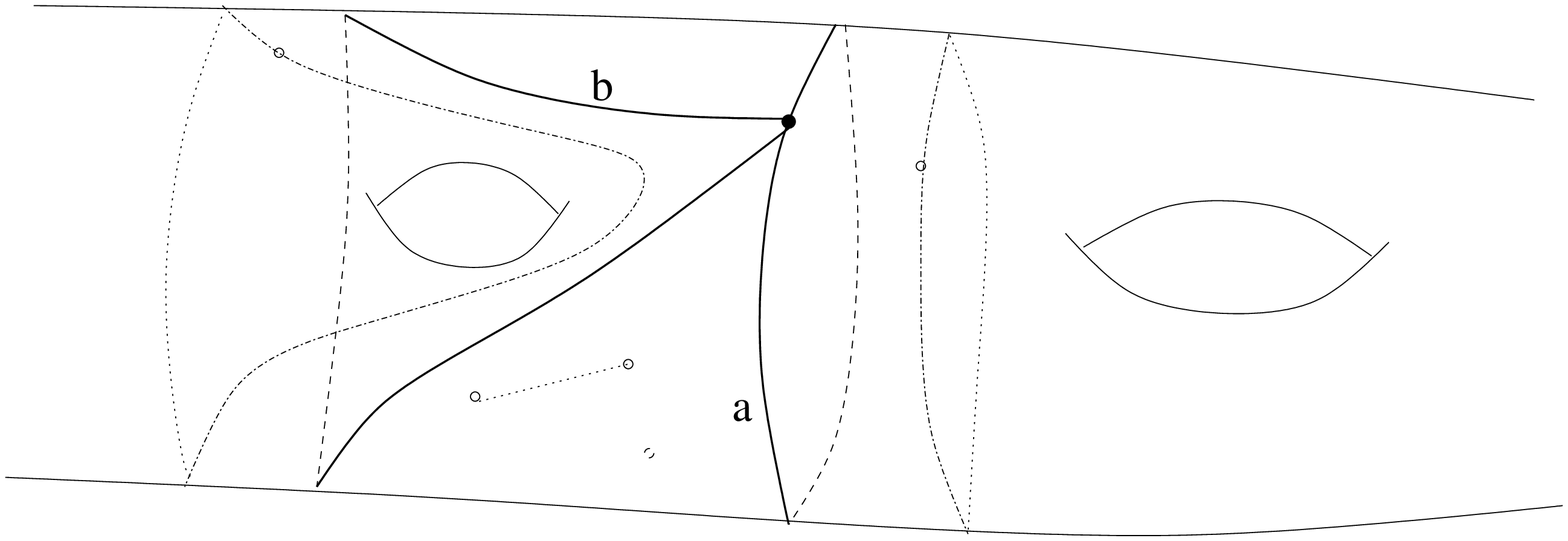} 
\includegraphics[width=3.5cm]{2.eps}
\caption{The case when both $a$ and $b$ are closed curves, but none of them bounds a disc \label{FIGURA5}}
\end{figure}
\end{enumerate}
\end{proof} 
At this point, we conclude the following result:
\begin{proposition}\label{arc}
The simplicial inclusion $j: A_{B}(S) \to A(S)$ induces a bilipschitz equivalence between $(A_{B}(S), d_{A_{B}(S)})$ and $(A(S),d_{A(S)})$. 
\end{proposition}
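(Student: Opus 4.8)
The plan is to show that the simplicial inclusion $j$, which is the identity on the common vertex set of $A_B(S)$ and $A(S)$, distorts distances by at most the multiplicative factor $4$ in each direction. Since both complexes have the same vertices and $j$ is the identity on them, $j$ is automatically surjective on vertices, hence $0$-dense; so once the two-sided Lipschitz estimate is in place we obtain a bilipschitz equivalence (and in particular a quasi-isometry) immediately. Throughout, all distances are computed as lengths of edge-paths in the respective $1$-skeleta.

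First I would establish the easy inequality $d_{A(S)}(a,b) \leq d_{A_B(S)}(a,b)$. The point is that every edge of $A_B(S)$ is an edge of $A(S)$: if two boundary graphs $G_\alpha$ and $G_\beta$ are disjoint, then in particular the underlying arcs $\alpha$ and $\beta$ are disjoint, so they span an edge in $A(S)$. Thus $A_B(S)$ is a subcomplex of $A(S)$ on the same vertex set, possibly with fewer edges, and any edge-path in $A_B(S)$ realizing $d_{A_B(S)}(a,b)$ is also an edge-path of the same length in $A(S)$. Comparing lengths yields the claimed inequality.

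Next I would prove the reverse inequality $d_{A_B(S)}(a,b) \leq 4\, d_{A(S)}(a,b)$ using Lemma \ref{conj}. Take a geodesic edge-path $a = x_0, x_1, \dots, x_n = b$ in $A(S)$, so that $n = d_{A(S)}(a,b)$ and consecutive vertices $x_i, x_{i+1}$ are joined by an edge of $A(S)$. By Lemma \ref{conj}, each such consecutive pair satisfies $d_{A_B(S)}(x_i, x_{i+1}) \leq 4$, and the triangle inequality in $A_B(S)$ then gives
$$d_{A_B(S)}(a,b) \leq \sum_{i=0}^{n-1} d_{A_B(S)}(x_i, x_{i+1}) \leq 4n = 4\, d_{A(S)}(a,b).$$

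Combining the two estimates yields $\tfrac{1}{4}\, d_{A_B(S)}(a,b) \leq d_{A(S)}(a,b) \leq d_{A_B(S)}(a,b)$ for every pair of vertices, so $j$ is a $(4,0)$-quasi-isometric embedding, i.e.\ a bilipschitz equivalence, and surjectivity on vertices makes it a quasi-isometry as well. The only substantial content lies in Lemma \ref{conj}, which is already established through its case analysis; the proposition itself reduces to this path-by-path comparison, so I do not expect any genuine obstacle beyond invoking the lemma along a geodesic in $A(S)$.
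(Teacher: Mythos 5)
Your proof is correct and follows essentially the same route as the paper's: the easy direction comes from $A_B(S)$ being a subcomplex of $A(S)$ on the same vertex set, and the reverse direction applies Lemma \ref{conj} edge-by-edge along an $A(S)$-geodesic (the paper phrases this as concatenating $A_B(S)$-geodesic segments $[x_i,x_{i+1}]$ into a path $\sigma^\sharp$, which is your triangle-inequality estimate in path form). Your write-up is in fact slightly more complete, since you justify the inequality $d_{A(S)} \leq d_{A_B(S)}$ that the paper merely asserts.
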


\begin{proof}
First, let us notice that for every pair of vertices $a_1, a_2$ in $A(S)$, we have $d_{A(S)}(a_1,a_2) \leq d_{A_B(S)}(a_1,a_2) ~.$ 

Let us consider a geodesic path $\sigma$ in $A(S)$ with endpoints $a_1,a_2$; say $\sigma$ is given by $a=x_0 x_1 \cdots x_n x_{n+1}=a_2$, where $x_0, \cdots x_{n+1}$ are vertices in $A(S)$. Consider the curve $\sigma^\sharp$ in $A_B(S)$ obtained by concatenating the geodesic segments $[x_i,x_{i+1}]$ in $A_B(S)$, say $\sigma^\sharp:~[x_0,x_1]*\cdots*[x_n,x_{n+1}]$. \\
By Lemma \ref{conj}, we have $ L_{A_B(S)}(\sigma^{\sharp}) \leq 4 L_{A(S)}(\sigma) ~.$
Of course, we also have 
$$ d_{A_B(S)}(a_1,a_2) \leq L_{A_B(S)}(\sigma^{\sharp}) \leq 4L_{A(S)}(\sigma) = 4 d_{A(S)}(a_1,a_2)~.$$
Hence, we get the following bilipschitz equivalence between the two distances: 
$$ d_{A(S)}(a_1,a_2) \leq d_{A_B(S)}(a_1,a_2) \leq 4 d_{A(S)}(a_1,a_2) ~.$$

\end{proof}

\subsection{$A_B(S)$ is quasi-isometric to $P_\partial(S)$}
A \emph{peripheral pair of pants}\index{pant!peripheral pair of} on $S$ is a pair of pants with at least one boundary component lying on $\partial S$. We define the complex of peripheral pair of pants $P_\partial(S)$ as the subcomplex of $D(S)$ induced by the vertices that are peripheral pairs of pants. A peripheral pair of pants is \emph{monoperipheral} if it has exactly one of its boundary components belonging to $\partial S$, otherwise it is \emph{biperipheral} (see \cite{MCP}). Any regular neighborhood of a boundary graph is a peripheral pair of pants.\\

Let us choose for every peripheral pair of pants $P$ an essential arc whose boundary graph has a regular neigbourhood isotopic to $P$. Any such choice determines a simplicial inclusion maps $i: P_\partial(S) \to A_B(S)$. Of course, there are infinitely many such maps. If $P$ is a monoperipheral pair of pants in $S$, there exists only one essential arc in $P$ whose boundary graph has a regular neighborhood isotopic to $P$. If $P$ is biperipheral, we can find $3$ such essential arcs (see Figure \ref{INJECT}). The path determined by the concatenation of these vertices has length $2$ in $A(S)$ (see Figure \ref{INJECT}) and length at most $8$ in $A_B(S)$ (by Lemma \ref{conj}).

\begin{figure}[hbp]
\centering
\includegraphics[width=4cm]{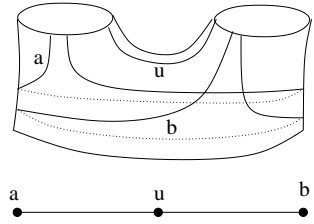}
\caption{Disjoint essential arcs with isotopic regular neighborhoods\label{INJECT}}
\end{figure}

\begin{proposition}\label{incl2}
The following statements hold:
\begin{enumerate}
\item Let $i_1, i_2: P_\partial(S) \to A_B(S)$ be two simpicial inclusions. For every $a,b \in P_\partial (S)$, the following inequalities hold 
$$d_{A_B(S)}(i_2(a),i_2(b)) - 16 \leq d_{A_B(S)}(i_1(a),i_1(b)) \leq d_{A_B(S)}(i_2(a),i_2(b)) + 16.$$
\item Any simplicial map $i: P_\partial (S) \to A_B(S)$ is an isometric embedding and a quasi-isometry. 
\end{enumerate}

\end{proposition}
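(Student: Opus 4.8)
The plan is to prove both statements by relating the two inclusions through short paths in $A_B(S)$, using the fact that each peripheral pair of pants carries only a small, explicitly bounded number of candidate boundary graphs. First I would establish the key local estimate: for any peripheral pair of pants $P$ and any two simplicial inclusions $i_1, i_2$, the vertices $i_1(P)$ and $i_2(P)$ are at bounded distance in $A_B(S)$. Since a monoperipheral $P$ admits exactly one admissible arc and a biperipheral $P$ admits exactly three (as recalled just before the statement, with the three arcs pairwise disjoint and forming a path of length $2$ in $A(S)$, hence of length at most $8$ in $A_B(S)$ by Lemma \ref{conj}), any two choices satisfy $d_{A_B(S)}(i_1(P), i_2(P)) \leq 8$. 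This is the numerical heart of the argument.

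For part (1), I would then run the standard triangle-inequality comparison. Given $a, b \in P_\partial(S)$, consider the edge path in $A_B(S)$ given by $i_1(a)\, i_2(a)\, i_2(b)\, i_2(b)$ — more precisely, concatenate a shortest path from $i_1(a)$ to $i_2(a)$, then a geodesic from $i_2(a)$ to $i_2(b)$, then a shortest path from $i_2(b)$ to $i_1(b)$. Applying the local estimate to the two end segments gives
$$d_{A_B(S)}(i_1(a), i_1(b)) \leq d_{A_B(S)}(i_2(a), i_2(b)) + 8 + 8 = d_{A_B(S)}(i_2(a), i_2(b)) + 16,$$
and the symmetric computation, swapping the roles of $i_1$ and $i_2$, yields the reverse inequality. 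This establishes the two-sided bound of part (1).

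For part (2), the main point is to show that a single simplicial inclusion $i$ is an isometric embedding, i.e. that $d_{A_B(S)}(i(a), i(b)) = d_{P_\partial(S)}(a, b)$. The inequality $d_{A_B(S)}(i(a), i(b)) \leq d_{P_\partial(S)}(a, b)$ should follow because $i$ is simplicial: adjacent peripheral pairs of pants can be realized disjointly, so we can choose their boundary graphs disjointly, making $i$ send edges to edges (possibly after verifying the disjointness survives the arc choice). For the reverse inequality, I would map a geodesic in $A_B(S)$ back to $P_\partial(S)$ by sending each boundary graph to a regular neighborhood, which is a peripheral pair of pants, and check that adjacent boundary graphs yield adjacent or equal pairs of pants; this realizes $d_{P_\partial(S)}$ as bounded above by the $A_B(S)$-distance. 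Finally, for the quasi-isometry statement I would argue that the image is coarsely dense: every boundary graph $G_\alpha$ has a regular neighborhood which is a peripheral pair of pants $P$, and $i(P)$ is one of the boundedly many admissible arcs for $P$, so $\alpha$ is within distance $8$ of $i(P_\partial(S))$.

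The step I expect to be the main obstacle is verifying cleanly that the round-trip map (boundary graph $\to$ regular neighborhood $\to$ chosen arc) moves vertices only a bounded amount and respects adjacency, since the passage from an arbitrary boundary graph to the specific arc selected by $i$ for its neighborhood pair of pants need not be the identity, and one must confront exactly the monoperipheral/biperipheral case distinction of the local estimate. The disjointness bookkeeping — ensuring that disjoint pairs of pants admit simultaneously disjoint admissible arcs, so that simpliciality and the coarse-density bound both hold — is where the argument requires care rather than a one-line citation.
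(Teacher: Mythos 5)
Your proposal is correct and takes essentially the same route as the paper: the local bound $d_{A_B(S)}(i_1(P),i_2(P))\leq 8$ coming from the three admissible arcs and Lemma \ref{conj}, the quadrilateral (triangle-inequality) argument for part (1), and for part (2) the forward inequality via simpliciality (disjoint pairs of pants give disjoint boundary graphs) together with the regular-neighborhood projection of an $A_B(S)$-geodesic for the reverse inequality. Your coarse-density argument for the quasi-isometry claim is in fact slightly more careful than the paper's, which loosely asserts that $i$ is surjective, whereas your observation that the image is only $8$-dense is what the argument actually yields.
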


\begin{proof}
Let us prove (1). \\
Let $a,b$ be two peripheral pairs of pants. By Lemma \ref{conj} we have the bounds ${d_{A_B(S)}(i_1(a),i_2(a)) \leq 8}$ and $d_{A_B(S)}(i_1(b),i_2(b)) \leq 8$.  
Looking at the quadrilateral with vertices $i_2(a)i_1(a)i_1(b)i_2(b)$, we get the following 
$$d_{A_B(S)}(i_2(a),i_2(b)) - 16 \leq d_{A_B(S)}(i_1(a),i_1(b)) \leq d_{A_B(S)}(i_2(a),i_2(b)) + 16 ~.$$
Let us prove (2). \\ 
From our definition, $i$ is surjective. We prove that it is an isometric embedding. 
If $P_1, P_2$ are disjoint peripheral pairs of pants, then their images are disjoint boundary graphs, and we have 
$d_{A_B(S)}(i(P_1),i(P_2)) \leq d_{P_\partial(S)}(P_1,P_2)~.$  
Now, let us consider the geodesic $\sigma$ in $A_B(S)$ given by the edge path $ \sigma: i(P_1)=b_0\cdots b_n= i(P_2)$. In a similar fashion, the condition that $b_i$ is disjoint from $b_{i+1}$ implies that the arcs have regular neighbourghoods that are disjoint from each other. Hence, the geodesic $\sigma$ projects to a curve $\sigma^\sharp$ on $A_B(S)$ given by isotopy classes of regular neighborhoods of the $b_i$'s, with endpoints are $P_1$ and $P_2$. We get 
$$d_{P_\partial(S)}(P_1,P_2) \leq L(\sigma^\sharp) \leq L(\sigma) = d_{A_B(S)}(i(P_1),i(P_2))~.$$ 
\end{proof}

Let us now consider the natural surjective map $\pi: A_B(S) \to P_\partial (S) $, which assigns to a boundary graph $a$ in $A_B(S)$ the isotopy class of the peripheral pair of pants given by a regular neighbourhood of $a$ in $S$. We notice that the map $\pi$ is not injective: any two vertices as in Figure \ref{INJECT} have the same image. It holds in general that if $d_{P_\partial(S)}(\pi b_1,\pi b_2) = 0$, then $d_{A_B(S)}(b_1,b_2) \leq 8$ (by Lemma \ref{conj}. For every simplicial inclusion map $i$ as in Proposition \ref{incl2}, we have $\pi \circ i = id_{P_\partial(S)}$ and $d_{A_B(S)}(i \circ \pi (x), x) \leq 4 d_{A(S)}(i \circ \pi (x), x)\leq 8$. 

We have the following:

\begin{proposition}\label{incl3}
The natural surjective map $\pi: A_B(S) \to P_\partial (S)$ is a $(1,8)$ quasi-isometric embedding and a quasi-isometry. 
\end{proposition}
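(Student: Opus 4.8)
The plan is to establish the two defining inequalities of a $(1,8)$-quasi-isometric embedding separately, and then to deduce that $\pi$ is a quasi-isometry from its surjectivity. Fix vertices $b_1,b_2\in A_B(S)$ and write $m=d_{P_\partial(S)}(\pi b_1,\pi b_2)$. The upper bound $m\le d_{A_B(S)}(b_1,b_2)$ I would obtain immediately from the fact that $\pi$ is a simplicial map: if two boundary graphs are disjoint then their regular neighbourhoods are disjoint (or coincide), so $\pi$ carries an edge of $A_B(S)$ to an edge or a vertex of $P_\partial(S)$ and is therefore distance non-increasing. This already yields $m\le d_{A_B(S)}(b_1,b_2)\le d_{A_B(S)}(b_1,b_2)+8$.

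The substance of the argument is the reverse inequality $d_{A_B(S)}(b_1,b_2)\le m+8$. Here I would lift a geodesic directly rather than route the estimate through a section map $i$ as in Proposition \ref{incl2}, since the latter, combined with $d_{A_B(S)}(i\circ\pi(x),x)\le 8$, would only give the additive constant $16$. Choose a geodesic $\pi b_1=P_0,P_1,\ldots,P_m=\pi b_2$ in $P_\partial(S)$. The key geometric observation is that two disjoint peripheral pairs of pants cannot share a boundary component of $S$; hence, if for each $j$ I pick an essential arc $a_j$ whose regular neighbourhood is $P_j$, realised so that consecutive pants are disjoint, then $a_j$ and $a_{j+1}$ have disjoint boundary graphs and are joined by an edge in $A_B(S)$.

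When $m\ge 1$ I may moreover take $a_0=b_1$ and $a_m=b_2$, both being legitimate choices since $\pi b_1=P_0$ and $\pi b_2=P_m$; the lifted edge path $b_1=a_0,\ldots,a_m=b_2$ then has length at most $m$, so $d_{A_B(S)}(b_1,b_2)\le m$. When $m=0$ we have $\pi b_1=\pi b_2$, so $b_1$ and $b_2$ are arcs with isotopic regular neighbourhoods; these lie at distance at most $2$ in $A(S)$ (as in Figure \ref{INJECT}), hence at distance at most $8$ in $A_B(S)$ by Lemma \ref{conj}, giving $d_{A_B(S)}(b_1,b_2)\le 8=m+8$. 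Combining the two cases gives $d_{A_B(S)}(b_1,b_2)\le m+8$ in general, which is the remaining inequality.

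Finally, $\pi$ is surjective by construction, so its image is all of $P_\partial(S)$ and is in particular $1$-dense; thus the quasi-isometric embedding is a quasi-isometry. The one delicate point is the case $m=0$: this is exactly where $\pi$ fails to be injective (the three arcs of a biperipheral pair of pants in Figure \ref{INJECT} collapse to a single vertex), and it is what forces the additive constant $8$, every other step being controlled with no loss. I would therefore handle that case with care, applying the bound of Lemma \ref{conj} to a pair of arcs realising the isotopic-neighbourhood configuration.
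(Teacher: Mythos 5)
Your proposal is correct and follows essentially the same route as the paper: the $1$-Lipschitz bound from simpliciality, the lift of a geodesic in $P_\partial(S)$ to $A_B(S)$ by choosing a boundary graph in each pair of pants (disjoint pants containing their boundary graphs forces disjoint graphs), the diameter-$8$ bound on fibers via Lemma \ref{conj}, and surjectivity to upgrade to a quasi-isometry. Your treatment of the endpoints (taking $a_0=b_1$, $a_m=b_2$ when $m\geq 1$ and isolating the $m=0$ case) is in fact slightly cleaner than the paper's, whose concluding display contains a typo where $d_{A_B(S)}(b_1,b_2)$ should appear as the middle term.
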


\begin{proof}
If $b_1$ and $b_2$ are disjoint boundary graphs, then their regular neighborhoods are disjoint peripheral pairs of pants. Furthermore, if $P_1,P_2$ are disjoint pairs of pants, then one can realize disjointly every pair of boundary graphs $b_1,b_2$, whose regular neighbourhoods are $P_1,P_2$. Thus, we have the first inequality $d_{P_\partial(S)}(\pi b_1, \pi b_2) \leq d_{A_B(S)}(b_1,b_2)$. \\ 
If $\pi b_1 \neq \pi b_2$, let $\sigma$ be a geodesic in $P_\partial(S)$ defined by the concatenation of vertices $\sigma: \pi b_1 = P_1 \cdots P_n = \pi b_2$, with $P_i \cap P_{i+1} = \varnothing $. Choosing for every $P_i$ a boundary graph $b_i$, we get a curve $\sigma^\sharp$ in $A_B(S)$ given by the edge path $\sigma^\sharp: ~ b_1 \cdots b_n$, with $d_{A_B(S)}(b_i, b_{i+1}) =1 $. Hence, we get $d_{A_B(S)}(b_1,b_n)\leq L(\sigma^\sharp) = d_{P_\partial(S)}(\pi b_1, \pi b_2). $ \\
Finally, we get 
$$ d_{P_\partial (S)}(\pi b_1, \pi b_2) - 8 \leq d_{P_\partial(S)}(\pi b_1, \pi b_2) \leq d_{P_\partial (S)}(\pi b_1, \pi b_2) +8~. $$ 
\end{proof}

Using all the results proved in this section, we have:
\begin{theorem}\label{arc-per}
If $S=S_{g,b}$ such that $b \geq 3$ and $S\neq S_{0,4}$, the following holds: 
\begin{enumerate}
\item The complex of arcs $A(S)$ is quasi-isometric to the subcomplex $P_\partial(S)$ of $D(S)$.
\item If $g=0$, then the natural simplicial inclusion $P_\partial(S) \to D(S)$ is an isometric embedding and a quasi-isometry.
\item If $g \geq 1$, the image of the natural simplicial inclusion $k: P_\partial(S) \to D(S)$ is $2$-dense in $D(S)$, but the map $k$ is not a quasi-isometric embedding.
\end{enumerate}
\end{theorem}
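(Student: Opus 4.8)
The plan is to handle the three parts in turn, reusing the quasi-isometries already established in this section.

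Part (1) is essentially a composition. By Proposition~\ref{arc} the inclusion $A_B(S)\to A(S)$ is a bilipschitz equivalence, and by Propositions~\ref{incl2} and~\ref{incl3} the complexes $A_B(S)$ and $P_\partial(S)$ are quasi-isometric. Since a composition of quasi-isometries is again a quasi-isometry, I would conclude that $A(S)$ is quasi-isometric to $P_\partial(S)$, merely keeping track of the multiplicative and additive constants coming from Lemma~\ref{conj}.

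Part (2). Because $P_\partial(S)$ is a subcomplex of $D(S)$, the inequality $d_{D(S)}(P_1,P_2)\le d_{P_\partial(S)}(P_1,P_2)$ is automatic, and density of the image (hence the quasi-isometry claim) is routine in genus $0$: every proper domain of $S_{0,b}$ is disjoint from some peripheral pair of pants, so the image is $c$-dense. The real content is the reverse inequality $d_{P_\partial(S)}(P_1,P_2)\le d_{D(S)}(P_1,P_2)$, which I would prove by imitating the boundary-curve projection used in the proof of Theorem~\ref{incl}. Given a $D(S)$-geodesic $P_1=X_0,\dots,X_n=P_2$, I would attach to each domain $X_i$ a peripheral pair of pants $\rho(X_i)$, with $\rho(X_0)=P_1$ and $\rho(X_n)=P_2$. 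The key genus-$0$ feature is that every essential curve separates $S$: choosing an essential boundary curve $x_i^b$ of $X_i$ and then the peripheral pair of pants sitting on a canonical side of $x_i^b$, the relation ``$x_i^b$ and $x_{i+1}^b$ are disjoint or isotopic'' (forced by $X_i\cap X_{i+1}=\varnothing$) upgrades to ``$\rho(X_i)$ and $\rho(X_{i+1})$ are disjoint or equal''. This yields an edge-path in $P_\partial(S)$ of length at most $n$, giving the isometric embedding. The main obstacle here is making the side-choice canonical enough that $\rho$ is genuinely monotone under disjointness; planarity is exactly what makes ``the side of $x_i^b$ containing a fixed marked boundary component'' well defined.

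Part (3). I would prove the two assertions separately. The $2$-density of the image is the easy half: given a domain $X$, pick an essential boundary curve $c$ of $X$ (every proper domain has one, otherwise $X=S$); the annulus around $c$ is a domain disjoint from $X$, and since $b\ge 3$ and $c(S)>0$ one can always find a peripheral pair of pants disjoint from $c$, so $X$ lies within $D(S)$-distance $2$ of $P_\partial(S)$. The heart of the statement, and the main obstacle of the whole theorem, is that $k$ fails to be a quasi-isometric embedding when $g\ge 1$. For this I would exploit a non-separating curve $c$, which as an annular domain provides a shortcut in $D(S)$: any two peripheral pairs of pants disjoint from $c$ are joined through the single vertex $c$, so their $D(S)$-distance is at most $2$. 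To produce such pairs that are nevertheless far apart in $P_\partial(S)$, I would fix a peripheral pair of pants $P$ disjoint from $c$ and a mapping class $\phi$ of infinite order supported on $S\setminus c=S_{g-1,b+2}$ (a pseudo-Anosov of this positive-complexity subsurface), and set $P_1^{(n)}=P$ and $P_2^{(n)}=\phi^n(P)$. All these pants are disjoint from $c$, so $d_{D(S)}(P_1^{(n)},P_2^{(n)})\le 2$, while $d_{P_\partial(S)}(P_1^{(n)},P_2^{(n)})\to\infty$. The divergence of the latter is the technical core: via the identification of $P_\partial(S)$ with $A(S)$ from Part (1), it reduces to the unboundedness of the $\phi$-orbit of an arc in $A(S)$, which follows from the growth of the subsurface projection to $S\setminus c$ for a pseudo-Anosov supported there. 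Finally, a family with $d_{D(S)}\le 2$ and $d_{P_\partial(S)}\to\infty$ violates the lower bound $\tfrac1h d_{P_\partial(S)}-k\le d_{D(S)}$ for every $h,k$, so $k$ is not a quasi-isometric embedding. I would close by remarking that this shortcut is unavailable in genus $0$, where every curve separates, which is precisely why Part (2) succeeds.
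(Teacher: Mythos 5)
Your part (1) follows the paper's own route (compose Proposition~\ref{arc} with Propositions~\ref{incl2} and~\ref{incl3}), and your density claims in (2) and (3) are fine. The genuine gap is in part (2), at exactly the step you flag as ``the main obstacle'': upgrading disjointness of the boundary curves $x_i^b$ to disjointness of the assigned pants $\rho(X_i)$. No static, neighbour-independent assignment can achieve this. First, ``the peripheral pair of pants sitting on a canonical side of $x_i^b$'' is not well defined: a side of a curve in $S_{0,b}$ in general contains infinitely many non-isotopic peripheral pairs of pants (even the pants spanned by two fixed boundary components of $S$ in that side depends on the choice of an arc joining them), so fixing the side does not fix the pants. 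Second, even granting some rule $c\mapsto\rho(c)$, two disjoint curves can have \emph{nested} canonical sides, and then $\rho(c)$ and $\rho(c')$ both live in the bigger side and have no reason to be disjoint (for instance both may run into the same boundary component of $S$). Third, and most fatally, your normalization $\rho(X_0)=P_1$, $\rho(X_n)=P_2$ is incompatible with canonicity at the junctions: $X_1$ is disjoint from $P_1$, but the canonical pants attached to $x_1^b$ may perfectly well intersect $P_1$; indeed, for any fixed rule $\rho$ one can pick a geodesic $P_1CP_2$ whose endpoints intersect $\rho(C)$. The paper's proof is built precisely to avoid this: the replacement pants for a curve $C_i$ is \emph{not} canonical but depends on its neighbours in the geodesic. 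Geodesity forces the two neighbours (say $P_1$ and $C_2$) to intersect each other, hence -- since every curve in a planar surface separates -- to lie on the same side of $C_1$; the \emph{opposite} side contains a boundary component of $S$, hence a peripheral pair of pants disjoint from both, and one iterates along the path (each new path is again a geodesic, so the argument propagates). Without this sequential, neighbour-dependent choice, the reverse inequality $d_{P_\partial(S)}(P_1,P_2)\leq d_{D(S)}(P_1,P_2)$ -- the whole content of (2) -- is not established.

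Your part (3), by contrast, is a correct alternative to the paper's argument. You use a non-separating curve $c$ as the $D(S)$-shortcut and a pseudo-Anosov $\phi$ supported on $S\setminus c$ to produce pants $P$, $\phi^n(P)$ with $d_{D(S)}\leq 2$ but $d_{P_\partial(S)}\to\infty$, the divergence coming from subsurface projection to $S\setminus c$ (one should check that every essential arc of $S$ projects nontrivially to $S\setminus c$, which holds because all arcs end on $\partial S\subset\partial(S\setminus c)$). This works, but it imports Masur--Minsky technology (Lipschitz behaviour of projections under disjointness, positive translation length of pseudo-Anosovs on the curve complex) that the paper never uses. The paper's argument is self-contained: it takes the separating curve $c$ enclosing all of $\partial S$, splitting $S$ into $B=S_{0,b+1}$ and $C=S_{g,1}$; a strip-replacement surgery shows $d_{P_\partial(S)}\geq d_{P_\partial(B)}$ for pants in $B$; part (2) applied to the planar surface $B$ gives $\mathrm{diam}\, P_\partial(B)=+\infty$; and all such pants are at $D(S)$-distance $2$ through the domain $C$. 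So for (3) you trade the theorem's internal logic (using (2) as a lemma) for external machinery; both are valid, but your route makes (2) and (3) independent, whereas in the paper (3) leans on (2) -- which, as noted above, you have not actually proved.
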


\begin{proof}
The proof of (1) follows from the consideration that both the compositions $j\circ i,~j \circ \pi: A(S) \to P_\partial(S)$ of the maps in Lemma \ref{conj}, Proposition \ref{incl2} and  Proposition \ref{incl3} are quasi-isometries.
 
Let us prove (2). \\ 
Let $X$ be a domain. As a vertex of $D(S)$, $X$ is at distance 1 from each of the vertices representing its essential boundary components, and each essential boundary component of $X$ is at distance 1 from a pair of pants in $P_\partial(S)$. This shows that the image of the inclusion $P_\partial(S) \to D(S)$ is $2$-dense.\\ 
Recall that by our hypothesis on $S$, if $S=S_{0,b}$, then $b \geq 5$. Since the genus of $S$ is 0, each domain $X$ of $S$ is a sphere with holes, and each simple closed curve on $S$ disconnects the surface into two connected components, and each of them has at least one boundary component lying on $\partial S$. Let $P_1, P_2$ be peripheral pairs of pants of $S$, and $\gamma$ be a geodesic in $D(S)$ joining them, say $\gamma~:~P_1X_1\cdots X_{n-1}P_2$. \\
Let $\pi:D(S)\to C(S)$ be a coarse projection and $i:C(S) \to D(S)$ the natural simplicial inclusion. Notice that if $X_1$ is not a curve then $i(\pi(X_1))$ is a curve and it is disjoint from both $P_1$ and $X_2$. Moreover, $i(\pi(X_1))$ is also distinct from $X_2$ (otherwise we could shorten $\gamma$). Up to substituting $X_1$ with $i(\pi(X_1))$ and $X_{n-1}$ with $i(\pi(X_{n-1}))$, we can assume that both $X_1$ and $X_{n-1}$ are represented by simple closed curves. Moreover, up to substituting the segment of $\gamma$ given by $X_1\cdots X_{n-1}$ with the geodesic in $C(S)$ which joins $X_1$ and $X_{n-1}$ (see Theorem \ref{incl}), we can assume that each $X_i$ is a curve, say $C_i$.

If $\gamma$ has length $2$, it is represented by a path $P_1CP_2$. By geodesity, $P_1$ and $P_2$ belong to the same connected component of $S\setminus C$. Hence, there is a peripheral pair of pants $P^\star$ in the other connected component, and we find a new geodesic of length $2$ connecting $P_1,P_2$ contained in $P_\partial(S)$, namely $P_1P^\star P_2$.

If $\gamma$ has length greater than $2$, let us focus on the initial segment of $\gamma$ given by $P_1C_1C_2$. With the same argument, we can find a peripheral pair of pants $P^{c_1}$ disjoint from both $P_1$ and $C_2$, which can substitute $C_1$ in $\gamma$. Notice that the curve $P_1P^{c_1}C_2\cdots C_{n-1}P_2$ has the same lenght as $\gamma$, hence is a geodesic. 
Continuing this way, we get a path $\gamma^\star:~P_1P^{c_1}\cdots P^{c_{n-1}}P_2$ with all vertices in $P_\partial(S)$ with the same lenght as $\gamma$. Hence, we have
$$d_{D(S)}(P_1,P_2)\leq d_{P_{\partial(S)}}(P_1,P_2) \leq L_{P_{\partial(S)}}(\gamma^\star) = L_{D(S)}(\gamma^\star)=d_{D(S)}(P_1,P_2)~.$$

Let us prove (3). \\
As in the previous case, it is easy to see that the image of $k$ is $2$-dense. \\
 Let $c$ be a simple closed loop on $S$ wrapping around all the boundary components of $S$ as in Figure \ref{ultima}. Since $S$ is not a sphere, $c$ is essential, and disconnects the surface in two domains: one of them is the sphere $B=B_{0,b+1} \subset S$ which contains all the boundary components of $S$, the other is the subsurface $C=C_{g,1}$ which has $c$ as unique boundary component (see Figure \ref{ultima}). 
\begin{figure}[b]
\centering
\includegraphics[width=4cm]{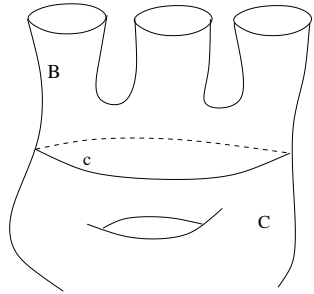}
\caption{The subsuface $C=C_{g,1}$ \label{ultima}}
\end{figure}
First, we claim that for every pair of peripheral pair of pants $P_1,P_2$ we have $d_{P_\partial(S)}(P_1,P_2) \geq d_{P_\partial(B)}(P_1,P_2)$. Let $\gamma$ be a geodesic in $P_\partial(S)$ joining $P_1,P_2$. If $Q$ is a vertex of $\gamma$, but not a peripheral pair of pants of $B$, then $Q$ crosses transversely a regular neighborhood of the curve $c$ and $Q \cap C$ is a strip. We can then replace this strip with one of the two strips of the neighborhood of $c$ in order to get a new peripheral pair of pants $Q' \in P_\partial (B)$, which can substitute $Q$ in $\gamma$. The curve obtained from $\gamma$ after all these substitutions may be shorter than $\gamma$, but all its vertices belong to $P_\partial(B)$. Hence, we have $d_{P_\partial(S)}(P_1,P_2) \geq d_{P_\partial(B)}(P_1,P_2)$.
Now, by the hypothesis $B$ has at least $4$ boundary components, hence by statement (2) we have $\mathrm{diam } ~P_\partial(B) = \mathrm{diam } ~D(B) = + \infty$. Hence, there exist $P_0, P_n$ peripheral pair of pants on $B$ such that $d_{P_\partial(B)}(P_0,P_n) \geq n$. Without loss of generality, we can also assume that all the boundary components of $P_0,P_n$ are boundary components of $S$. By our claim, we have $d_{A_\partial(S)}(P_0,P_n) \geq n$. Moreover, since both $P_0$ and $P_n$ are disjoint from $C$, we have $d_{D(S)}(P_0,P_n) = 2$, and this concludes the proof.
\end{proof}
As an immediate consequence of Theorem \ref{arc-per} and Theorem \ref{incl}, we have:
\begin{corollary}
If $S=S_{0,b}$ and $b \geq 5$, then $A(S)$ is quasi-isometric to $C(S)$. 
\end{corollary}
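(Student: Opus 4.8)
The plan is to obtain the quasi-isometry $A(S) \simeq C(S)$ by composing three quasi-isometries supplied by the results already established in this section and in Section~2, exploiting the fact that ``being quasi-isometric'' is an equivalence relation among length spaces. The key observation is that the hypotheses $b \geq 5$ and $g = 0$ put us simultaneously in the regime where Theorem~\ref{arc-per} applies (we have $b \geq 3$, $S \neq S_{0,4}$, and genus $0$) and where the genus-$0$ conclusions of that theorem are available.

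First I would verify that the standing assumptions are met: for $S = S_{0,b}$ with $b \geq 5$ we have $b \geq 3$ and $S \neq S_{0,4}$, so all the machinery of Section~3 is in force, and moreover $g = 0$. Then I would assemble the chain step by step. Step one: by Theorem~\ref{arc-per}(1), the arc complex $A(S)$ is quasi-isometric to $P_\partial(S)$. Step two: since $g = 0$, Theorem~\ref{arc-per}(2) tells us that the natural simplicial inclusion $P_\partial(S) \to D(S)$ is an isometric embedding and a quasi-isometry, so $P_\partial(S)$ is quasi-isometric to $D(S)$. Step three: applying Theorem~\ref{incl} with $\Delta(S) = D(S)$, the inclusion $C(S) \to D(S)$ is an isometric embedding and a quasi-isometry, whence $C(S)$ is quasi-isometric to $D(S)$.

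Putting these together gives the chain
$$A(S) \;\simeq\; P_\partial(S) \;\simeq\; D(S) \;\simeq\; C(S),$$
and I would conclude by invoking the transitivity of quasi-isometry. Concretely, if $f \colon A(S) \to P_\partial(S)$, $g \colon P_\partial(S) \to D(S)$, and a quasi-isometry $D(S) \to C(S)$ (a quasi-inverse of the inclusion) are the relevant maps, their composition is again a quasi-isometry, with constants obtained multiplicatively and additively from the individual ones.

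There is essentially no substantive obstacle here, since the corollary is by design an immediate consequence of the two cited theorems; the only point requiring care is the bookkeeping that each of the three maps is genuinely a quasi-isometry \emph{onto} (i.e.\ that the coarsely dense image allows passage to a quasi-inverse) so that the compositions remain well defined as quasi-isometries between the spaces in the stated order. This is standard, and the constants need not be tracked explicitly for the qualitative conclusion. I would therefore keep the argument to the three-line composition above rather than reproving any of the constituent facts.
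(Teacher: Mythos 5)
Your proposal is correct and is precisely the argument the paper intends: the corollary is stated as an immediate consequence of Theorem~\ref{arc-per} (parts (1) and (2), the latter available since $g=0$ and $b\geq 5$ rules out $S_{0,4}$) and Theorem~\ref{incl}, chained together by transitivity of quasi-isometry exactly as you describe. No gaps; the hypothesis check and the composition $A(S) \simeq P_\partial(S) \simeq D(S) \simeq C(S)$ are all that is needed.
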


\section{Application: the arc and curve complex}
The \emph{arc and curve complex}\index{complex! arc and curve} $AC(S)$ is the simplicial complex whose $k$-simplices are collections of $k+1$ isotopy classes of essential arcs or curves on $S$. Its automorphism group is the extended mapping class group, and it is quasi-isometric to the curve complex (see \cite{KP}). 
As an application of Theorem \ref{arc-per} and Theorem \ref{incl}, we give a simple proof of the quasi-isometry between $C(S)$ and $AC(S)$ stated in \cite{KP} in the case $S=S_{g,b}$ with $b \geq 3$ and $S \neq S_{0,4}$. Moreover, we give necessary and sufficient conditions on $S$ for the natural inclusion $A(S) \to AC(S)$ to be a quasi-isometry. The latter result is also stated in \cite{MS}. 
\begin{theorem}
If $S=S_{g,b}$ with $b \geq 3$ and $S \neq S_{0,4}$, then the following holds: 
\begin{enumerate}
\item The arc and curve complex $AC(S)$ is quasi-isometric to $C(S)$. 
\item If $S$ is a sphere with boundary, the simplicial inclusion $A(S) \to AC(S)$ is a quasi-isometry. In the other cases, the simplicial inclusion is not a quasi-isometric embedding. 
\end{enumerate}
\end{theorem}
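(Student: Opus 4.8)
The plan is to leverage the machinery already assembled in Sections 2 and 3, reducing both claims to results about $C(S)$, $A(S)$, and $P_\partial(S)$ rather than analyzing $AC(S)$ directly.

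For statement (1), I would observe that $AC(S)$ is precisely a subcomplex of $D(S)$ after applying the standard identification of arcs and curves with domains: every essential curve corresponds to an essential annulus (a vertex of $C(S) \subset D(S)$), and every essential arc on $S$ corresponds to its regular neighborhood, which is a peripheral pair of pants, hence a vertex of $P_\partial(S) \subset D(S)$. Under this identification $AC(S)$ maps into the subcomplex of $D(S)$ spanned by the vertices of $C(S)$ together with the vertices of $P_\partial(S)$. The key technical point to verify is that this correspondence is a quasi-isometry between $AC(S)$ and that spanned subcomplex, which follows because disjointness of arcs/curves translates (up to bounded error, exactly as in Lemma \ref{conj} and Proposition \ref{incl3}) into disjointness of the corresponding domains. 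Once $AC(S)$ is realized coarsely as a subcomplex $\Delta(S)$ of $D(S)$ containing $C(S)$, Theorem \ref{incl} applies directly: the inclusion $C(S) \to \Delta(S)$ is a quasi-isometry, and composing the quasi-isometries yields that $AC(S)$ is quasi-isometric to $C(S)$.

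For statement (2), the sphere-with-boundary case $g=0$ should follow quickly. Here I would combine Theorem \ref{arc-per}(1), which gives that $A(S) \simeq P_\partial(S)$, with Theorem \ref{arc-per}(2), which gives that $P_\partial(S) \to D(S)$ is a quasi-isometry when $g=0$; since $AC(S)$ coarsely coincides with a subcomplex of $D(S)$ containing $C(S)$, and $C(S)$ is quasi-isometric to all of $D(S)$ by Corollary \ref{COROLLARIO}(1), the inclusion $A(S) \to AC(S)$ becomes a composition of quasi-isometries. For the negative direction when $g \geq 1$, the natural strategy is to transport the failure of $P_\partial(S) \to D(S)$ to be a quasi-isometric embedding, established in Theorem \ref{arc-per}(3). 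Concretely, I would exhibit a pair of arcs whose boundary-graph pairs of pants $P_0, P_n$ satisfy $d_{P_\partial(S)}(P_0,P_n) \geq n$ (so $d_{A(S)}(P_0,P_n)$ is unbounded, using Proposition \ref{arc} and Theorem \ref{arc-per}(1)), yet whose images in $AC(S)$ stay within bounded distance — precisely because in $AC(S)$ one may route through the curve $c$ of Figure \ref{ultima}, giving $d_{AC(S)} \leq$ constant, mirroring the computation $d_{D(S)}(P_0,P_n)=2$ in the proof of (3).

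The main obstacle I anticipate is statement (1): making rigorous the assertion that $AC(S)$ is coarsely a subcomplex of $D(S)$. The subtlety is that the identification of arcs with peripheral pairs of pants is many-to-one (three arcs can share one biperipheral pair of pants, as in Figure \ref{INJECT}), and disjointness of two arcs sharing a boundary component does not translate cleanly into disjointness of domains; this is exactly the $A_B(S)$-versus-$A(S)$ discrepancy handled in Section 3. Consequently the map $AC(S) \to D(S)$ is only a quasi-isometry, not a simplicial isomorphism onto its image, and I must ensure the additive constants from Lemma \ref{conj} and Propositions \ref{incl2}–\ref{incl3} are absorbed. The cleanest route is probably to define the coarse map $AC(S) \to D(S)$ by sending curves to annuli and arcs to regular neighborhoods, then bound distances additively using the section 3 estimates, rather than seeking an honest simplicial inclusion.
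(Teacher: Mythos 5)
Your proposal is correct and follows essentially the same route as the paper: the paper likewise identifies $AC(S)$ coarsely with the subcomplex $P_\partial C(S)$ of $D(S)$ spanned by $C(S)$ and $P_\partial(S)$ (factoring through the intermediate complex $A_BC(S)$ spanned by $A_B(S)$ and $C(S)$, which is exactly your mechanism for absorbing the $A_B(S)$-versus-$A(S)$ discrepancy via Lemma \ref{conj} and Propositions \ref{incl2}--\ref{incl3}), then applies Theorem \ref{incl} for statement (1). For statement (2) the paper phrases the reduction as a commutative diagram showing $A(S)\to AC(S)$ is a quasi-isometric embedding if and only if $P_\partial(S)\to D(S)$ is, then invokes Theorem \ref{arc-per}(2)--(3); your explicit transport of the counterexample (routing through the curve $c$ to get bounded $AC(S)$-distance while $d_{A(S)}$ is unbounded) is just the unwound form of the same argument.
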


\begin{proof}
Let us prove (1). We consider the subcomplex of $D(S)$ spanned by the vertices of $C(S)$ and $P_\partial(S)$, say $P_\partial C(S)$, and the subcomplex of $AC(S)$ spanned by the vertices of $A_B(S)$ and $C(S)$, say $A_BC(S)$. We remark that:
\begin{description}
 \item{i.} $AC(S)$ is quasi-isometric to $A_BC(S)$, by a quasi-isometry given by the inclusion of $A_B(S)$ in $A(S)$ as in Lemma \ref{conj}; 
\item{ii.} $A_BC(S)$ is quasi-isometric to $P_\partial C(S)$, by a quasi-isometry induced by the natural isometric embedding $i: P_{\partial}(S) \to A_B(S)$ as in Proposition \ref{incl2}.
 \end{description} 
By Theorem \ref{incl} the complex $P_\partial C(S)$ is quasi-isometric to $C(S)$, and we conclude. 

Let us prove (2). By Corollary \ref{COROLLARIO} the simplicial inclusion $P_\partial C (S) \to D(S) $ is a quasi-isometry. \\ In the diagram below the horizontal rows are the natural simplicial inclusions, and the vertical rows are the above mentioned quasi-isometries. The diagram commutes, hence the natural inclusion $A(S) \to AC(S)$ is a quasi-isometric embedding if and only if the natural inclusion $P_\partial (S) \to P_\partial C (S)$ is a quasi-isometric embedding. Moreover, the latter holds if and only if the resulting inclusion map $P_\partial(S) \to D(S)$ is a quasi-isometric embedding. We then conclude using Theorem \ref{arc-per}.
$$\xymatrix{
A(S)  \ar[r] \ar[d]   & AC(S)  \ar[d]  \\
A_B(S) \ar[r] \ar[u]^{q.i.} \ar[d] & A_BC(S) \ar[u]^{q.i.} \ar[d] \\   
P_\partial(S) \ar[u]^{q.i} \ar[r] & P_\partial C(S) \ar[r]^{q.i.} \ar[u]^{q.i.} &D(S) \ar[l] \\
}$$
\end{proof}

Valentina Disarlo \\
Universit\'e de Strasbourg - IRMA \\
7, rue Ren\'e-Descartes \\ 
67084 Strasbourg Cedex (France) \\ \\
Scuola Normale Superiore di Pisa \\
Piazza dei Cavalieri, 7
56100 Pisa (Italy) \\
valentina.disarlo@sns.it


\begin{thebibliography}{9}
\bibitem{Har} W. J. Harvey, Geometric structure of surface mapping class groups, Homological group theory (\textit{Proc. Sympos., Durham}, 1977), 255 -- 269, London Math. Soc. Lecture Note Ser., 36, Cambridge Univ. Press, Cambridge-New York, 1979 .
\bibitem{Kork} M. Korkmaz, Automorphisms of complexes of curves on punctured spheres and on punctured tori, \textit{Topology and Applications} 95, no. 2 (1999), 85--111 .
\bibitem{KP} M. Korkmaz, A. Papadopoulos, On the arc and curve complex of a surface. \textit{Math. Proc. Cambridge Philos. Soc.} 148 (2010), no. 3, 473--483 .
\bibitem{MM1} H. A. Masur and Y. N. Minsky, Geometry of the curve complex I: Hyperbolicity, \textit{Invent. Maths.} 138 (1999), 103--149 .
\bibitem{MCP} J. D. McCarthy and A. Papadopoulos, Simplicial actions of mapping class groups, \textit{Handbook of Teichm\"uller theory, Volume III}, European Mathematical Society Publishing House, Zurich, (2011) .
\bibitem{Iv} N. Ivanov, Automorphisms of complexes of curves and of Teichm\"uller spaces, \textit{Int. Math. Res. Not.} 14 (1997), 651--666 .
\bibitem{Luo} F. Luo, Automorphisms of the complex of curves, \textit{Topology} 39 n. 2 (2000), 283--298 .
\bibitem{MC} E. Irmak, J. D. McCarthy, Injective simplicial maps of the arc complex, \textit{Turk. J. Math.} 33 (2009), 1-16 . 
\bibitem{MS} H. A. Masur and S. Schleimer, The Geometry of the Disc Complex, pre-print http://arxiv.org/abs/1010.3174 .
 \end{thebibliography}
 \end{document}